\newtheorem{theorem}{Theorem}[section]
\newtheorem{corollary}[theorem]{Corollary}
\newtheorem{proposition-definition}[theorem]{Proposition-Definition}
\newtheorem{proposition}[theorem]{Proposition}
\newtheorem{lemma}[theorem]{Lemma}
\theoremstyle{definition}
\newtheorem{definition}[theorem]{Definition}
\newtheorem{remark}[theorem]{Remark}
\newtheorem{example}[theorem]{Example}
\newcommand{\mE}{\mathcal{E}}
\newcommand{\mF}{\mathcal{F}}
\newcommand{\mO}{\mathcal{O}}
\newcommand{\mZ}{\mathbb{Z}}
\newcommand{\mR}{\mathbb{R}}
\newcommand{\mC}{\mathbb{C}}
\newcommand{\mQ}{\mathbb{Q}}
\newcommand{\ds}{\displaystyle}
\newcommand{\cN}{\mathcal{N}}
\newcommand{\Hom}{\text{Hom}}
\newcommand{\Spec}{\text{Spec}\,}
\newcommand{\nocontentsline}[3]{}
\newcommand{\tocless}[2]{\bgroup\let\addcontentsline=\nocontentsline#1{#2}\egroup}
\begin{document}
	
	\title{Toric Foliated Minimal Model Program}
	\author{Weikun Wang}
	\address{Department of Mathematics, Southern University of Science and Technology, Shenzhen 518055, China}
	\email{\href{wangwk@sustech.edu.cn}{wangwk@sustech.edu.cn}}
	
	\begin{abstract}
		Using the theory of Klyachko filtrations for reflexive sheaves on toric varieties, we give a description of toric foliations and their singularities in terms of combinatorial data. We extend Spicer's results about co-rank one toric foliations to those of any ranks on $\mathbb{Q}$-factorial toric varieties and prove that the  toric foliated Minimal Model Program exists.  
	\end{abstract}
	\maketitle
	\tableofcontents

	\section{Introduction}

Over the last few decades, a lot of work has been done to understand the birational geometry of foliations. In particular, the canonical divisor of a foliation plays a similarly important role as the canonical divisor of a variety.  Therefore it is very natural to ask if we can analyze and classify foliations as what we do in the classical Minimal Model Program.

In the case of rank $1$ foliations on surfaces, McQuillan \cite{McQ08} and Brunella \cite{Bru15} gave a detailed classification of foliations in terms of the numerical Kodaira dimension. For threefolds, Spicer and Cascini proved in \cite{Spi20} and \cite{CS21} the existence of minimal models for F-dlt foliated pairs of co-rank $1$. Recently, in \cite{CS20}, they extended many of McQuillan's results to rank $1$ foliations on threefolds.

For higher dimensional varieties, the field still remains quite unexplored. But for toric foliations, it was shown by Spicer in \cite{Spi20} that the foliated Minimal Model Program can be run in the case of co-rank $1$. In this paper, we extend his results to toric foliations of any ranks on $\mQ$-factorial toric varieties.

To achieve this goal, we need a combinatorial description of torus equivariant sheaves on toric varieties. Introduced by Klyachko in \cite{Kly90} and then developed by Perling \cite{Per04a} and Kool \cite{Koo11}, the theory of Klyachko filtrations tells us how to describe a toric sheaf on a smooth toric variety in terms of multi-filtrations of vector spaces. Indeed, many of the results can be extended to $\mQ$-factorial toric varieties as in \cite{CT22} or even smooth toric Deligne-Mumford stacks as in \cite{GJK17} and \cite{Wan20}.

In the first section, we recall Klyachko's description of toric sheaves on toric varieties. Then we describe toric foliations in terms of multi-filtrations. In \cite{Pan15}, the first Chern class of a toric foliation is calculated directly. We get the same result by the combinatorial method given by \cite{Koo11} and \cite{CT22}.
\begin{theorem}
	\label{11}
	Let $X$ be a $\mQ$-factorial toric variety with its fan in the lattice $N$. For every subspace $V$ of $N_\mC$, there is an associated toric foliation $\mF_V$. The first Chern class of $\mF_V$ is $\sum_{\rho \subset V} D_\rho$, where $D_\rho$
	is the divisor corresponding to the ray $\rho$. Hence the canonical divisor of the foliation, defined as $K_{\mF_V} := -c_1(\mF_V)$, is $-\sum_{\rho \subset V} D_\rho$, where the sum is over all non $\mF_V$-invariant toric divisors. 
\end{theorem}

In the second section, we generalize the results of \cite{CLS11} and \cite{Mat13} for canonical divisors of toric varieties and show that singularities of toric foliations can be also described in terms of the geometry of convex cones. 
\begin{theorem} \label{12}
	Let $X$ be a $\mQ$-factorial toric variety with its fan $\Sigma$ in the lattice $N$. Let $\mF$ be the toric foliation associated to the subspace $V$ of $N_\mC$. Denote by $V_\mR$ the real vector space generated by real vectors of $V$. For every cone $\sigma$ in the fan $\Sigma$, set $S_1^\sigma =	\{m_{\sigma, V} \leq 1 \} \cap N \cap V_\mR  \cap \sigma $, $S_2^\sigma = \{m_{\sigma, V} \leq 0 \} \cap N \cap \{ N_\mR \setminus V_\mR\} \cap \sigma$ and $T_1^\sigma =	\{m_{\sigma, V} < 1 \} \cap N \cap V_\mR  \cap \sigma $. Then the criteria for $(X, \mF)$ to have terminal or canonical singularities are given as follows: 
	$$	\begin{array}{lll}
		\text{terminal} & \Longleftrightarrow & S_1^\sigma \cup S_2^\sigma = \{ 0 \} \cup \{v_i : \text{primitive vectors of rays in } \sigma \}  \quad   \forall \sigma \in \Sigma ,\\
		\text{canonical} & \Longleftrightarrow & T_1^\sigma = \{0\} \quad   \forall \sigma \in \Sigma .
	\end{array}$$ 
\end{theorem}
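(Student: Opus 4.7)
The plan is to adapt the classical Reid-style criteria for terminal and canonical singularities of toric varieties (cf.~\cite{CLS11}, \cite{Mat13}) to the foliated setting, using Theorem~\ref{11} to encode the canonical divisor of $\mF_V$ as explicit piecewise linear data. The central ingredient is a combinatorial discrepancy formula for toric exceptional divisors.

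Since $X$ is $\mQ$-factorial, each cone $\sigma \in \Sigma$ is simplicial and $K_{\mF_V}$ is $\mQ$-Cartier. By Theorem~\ref{11} the divisor $K_{\mF_V} = -\sum_{\rho \subset V} D_\rho$ is cut out on $\sigma$ by the linear functional $m_{\sigma, V}$ taking value $1$ on ray generators $v_i$ with $\rho_i \subset V$ and value $0$ on the rest. Given a toric refinement $\pi : X_{\Sigma'} \to X_\Sigma$ and a new ray $\rho'$ with primitive generator $v$ in the relative interior of $\sigma$, the standard pullback formula for $\mQ$-Cartier toric divisors, combined with a second application of Theorem~\ref{11} to $\Sigma'$ to read off $K_{\mF_V}$ on the refined model, gives the key discrepancy identity
$$ a\bigl(E_{\rho'}, X, \mF_V\bigr) \;=\; m_{\sigma, V}(v) \;-\; \mathbbm{1}_{v \in V}. $$

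Next I would reduce to toric valuations: since $\mF_V$ is torus-invariant and every birational model of a toric variety is dominated by an equivariant (hence toric) one, all discrepancies of $(X, \mF_V)$ are realised by toric exceptional divisors. Thus the singularity class of $(X, \mF_V)$ is controlled entirely by the sign of the above expression as $v$ ranges over primitive lattice points of each $\sigma$ distinct from the $v_i$. Splitting cases on whether $v \in V$ or not translates terminality into $m_{\sigma, V}(v) > 1$ when $v \in V$ and $m_{\sigma, V}(v) > 0$ when $v \notin V$; this, combined with the facts that $m_{\sigma, V}(v_i) \in \{0, 1\}$ automatically and $m_{\sigma, V}(0) = 0$, is precisely the stated equality $S_1^\sigma \cup S_2^\sigma = \{0\} \cup \{v_i\}$ (the sets being read at the level of primitive lattice vectors, where the two cases of the discrepancy formula organize into $S_1$ and $S_2$ respectively). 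The canonical case is identical after relaxing strict inequalities to non-strict ones, giving $T_1^\sigma = \{0\}$ and $T_2^\sigma = \emptyset$; note that $0 \in V$ forces $0 \in T_1^\sigma$, while $m_{\sigma, V} \geq 0$ on $\sigma$ makes $T_2^\sigma = \emptyset$ effectively the condition that no primitive lattice vector in $\sigma \setminus V$ lies on the $\{m_{\sigma, V} = 0\}$-face beyond the existing ray generators.

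The main technical obstacle I anticipate is the reduction to toric valuations in the foliated setting: while for toric varieties alone the fact that discrepancies can be tested on toric blowups is classical, the foliated analogue requires verifying that no non-equivariant exceptional divisor produces a smaller discrepancy than all toric ones. This should follow from the torus-invariance of the pair $(X, \mF_V)$ and equivariant desingularization, but is the step requiring the most care in a careful write-up. Once this reduction is in place, the combinatorial translation described above is routine.
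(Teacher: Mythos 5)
Your proposal is correct and takes essentially the same approach as the paper: both reduce to toric exceptional divisors obtained from a subdivision of $\Sigma$, establish the discrepancy formula $a(E_{\rho'},X,\mF_V)=m_{\sigma,V}(v)-1$ for $v\in V$ and $=m_{\sigma,V}(v)$ for $v\notin V$, and then translate the resulting sign conditions into the stated sets $S_i^\sigma$, $T_i^\sigma$. The only cosmetic difference is that the paper obtains this formula via the decomposition $\mO(K_X)=\mO(K_\mF)\otimes\det(\cN_\mF^*)$ together with the classical expression for $\mathrm{ord}_{E}(K_{X'}-\phi^*K_X)$, whereas you apply Theorem \ref{11} directly on both models; the toric-valuation reduction you flag as delicate is likewise left implicit in the paper's argument.
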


In the last section, we first prove the cone theorem for toric foliations of any rank.

\begin{theorem} \label{13}
	Let $\mF$ be a toric foliation on a $\mQ$-factorial toric variety $X$. Then $$\overline{NE}(X)_{K_\mF <0} = \sum \mR^+ [M_i],$$ where $M_i$ are torus invariant curves tangent to the foliation.
\end{theorem}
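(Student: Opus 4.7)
The plan is to reduce Theorem~\ref{13} to the classical cone theorem for $\mQ$-factorial toric varieties. The latter result gives $\overline{NE}(X) = \sum_\tau \mR^+[C_\tau]$ with $\tau$ ranging over the walls (codimension-one cones) of the fan $\Sigma$, with every extremal ray of this form. It will then suffice to show that every $K_\mF$-negative extremal ray admits a representative $C_\tau$ that is tangent to $\mF$.

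First I will establish a combinatorial criterion for tangency. The one-dimensional orbit $O_\tau$ has cocharacter lattice $N(\tau) := N/(N \cap \mathrm{span}(\tau))$, and the restriction of $\mF_V$ to $O_\tau$ corresponds to the image of $V$ in $N(\tau)_\mR$. This image is non-zero precisely when $V \not\subset \mathrm{span}(\tau)$, which is therefore the criterion for $C_\tau$ to be tangent. Combining this with Theorem~\ref{11} and the standard toric intersection formulas, I can then compute $K_\mF \cdot C_\tau$ to be proportional (up to a common positive scalar) to $-\sum_{v_i \in V} c_i$, where $\sum c_i v_i = 0$ is the wall relation for $\tau$, normalized so that the coefficients $c_n, c_{n+1}$ of the two extra rays (adjacent to the two maximal cones $\sigma_1, \sigma_2$ at $\tau$) are positive. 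When $\tau$ is non-tangent, so $V \subset \mathrm{span}(\tau)$, the extra rays $v_n, v_{n+1}$ automatically lie outside $V$, so only rays of $\tau$ itself can contribute to this sum.

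The key step will be to show that every $K_\mF$-negative extremal ray admits a tangent representative. For a non-tangent wall $\tau$ with $K_\mF \cdot C_\tau < 0$, some ray $\rho_i$ of $\tau$ with $v_i \in V$ must have $c_i > 0$; I propose to replace this ray by the extra ray $\rho_n$ of $\sigma_1$ (using simpliciality of $\Sigma$, which follows from $\mQ$-factoriality) to obtain a new wall $\tau'$. Since $v_i \in V$ does not lie in $\mathrm{span}(\tau')$, the new wall $\tau'$ is tangent. Using the wall relations for $\sigma_1$ and $\sigma_2$ simultaneously then lets one express $[C_\tau]$ as a non-negative combination of the tangent class $[C_{\tau'}]$ and additional wall classes with $K_\mF$-intersection $\geq 0$, which together with the directly tangent cases yields the desired cone decomposition.

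The hard part will be this final combinatorial step: verifying the non-negativity of the coefficients in the decomposition of $[C_\tau]$ into tangent wall classes. This becomes delicate when several rays of $\tau$ lie in $V$, so that multiple ray-replacements must be coordinated, and it requires leveraging that $v_n$ and $v_{n+1}$ sit on opposite sides of $\mathrm{span}(\tau)$ together with the two wall relations attached to $\sigma_1$ and $\sigma_2$.
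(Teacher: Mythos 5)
Your proposal follows essentially the same route as the paper: reduce to the toric cone theorem, characterize tangency of a wall curve $C_\tau$ by $V\not\subset\mathrm{span}(\tau)$ (the paper's Lemma 3.2), use the wall relation to find a ray $v_i\subset V$ with positive coefficient, and replace $v_i$ by $v_n$ (resp. $v_{n+1}$) to produce a tangent wall. The ``hard part'' you defer --- that the replaced walls carry curve classes on the same extremal ray as $[C_\tau]$, with the requisite non-negativity --- is precisely the structural result on extremal walls from \cite{Rei83} and \cite{Mat13} that the paper cites rather than reproves, so your plan closes by invoking that reference.
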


Similar to \cite{Spi20}, we are also able to prove the toric foliated MMP for any rank with results of \textbf{\hyperref[39]{Lemma \ref*{39}}} and \textbf{\hyperref[310]{Lemma \ref*{310}}}.

\begin{theorem} \label{14}
	Let $\mF$ be a toric foliation with canonical singularities on a $\mQ$-factorial toric variety $X$. Then the MMP for $\mF$ exists and ends with a foliation such that $K_\mF$ is nef or a fibration $\pi: X \to Z$ such that $\mF$ is pulled back from a foliation on $Z$. Furthermore, $\mF$ has non-dicritical singularities and the resulting foliation will also have non-dicritical singularities.
\end{theorem}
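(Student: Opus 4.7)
The strategy is to run the MMP combinatorially on the fan, using Theorems~\ref{11} and~\ref{12} to control $K_\mF$ and the singularity type at each step. Let $V \subset N_\mR$ be the subspace cutting out $\mF = \mF_V$. First I would apply the cone theorem (Theorem~\ref{13}): if $K_\mF$ is already nef, we stop; otherwise pick a $K_\mF$-negative extremal ray $R = \mR^+[M_i]$, where $M_i$ is a torus-invariant curve tangent to $\mF$ corresponding combinatorially to a codimension-one wall $\tau \in \Sigma$, and form the toric contraction $\phi: X \to Y$ of $R$.

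By Reid's trichotomy for toric MMP, $\phi$ is either divisorial, a Mori fibration, or a small (flipping) contraction. In the divisorial case, $Y$ is still $\mQ$-factorial and toric; since $V$ does not change, $\mF_V$ descends to the toric foliation on $Y$ cut out by the same $V$, and canonicity is checked on the new (coarser) fan via Theorem~\ref{12}. In the Mori fibration case $\phi: X \to Z$ with kernel lattice $N_0$, the relative Picard rank is one, so every torus-invariant vertical curve is numerically proportional to $M_i$ and hence $K_\mF$-negative; using Theorem~\ref{13} together with the combinatorial description of tangency, I would deduce $N_{0,\mR} \subset V$, so that $\mF_V$ is the pullback of the toric foliation on $Z$ determined by the image $V / N_{0,\mR}$. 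In the small case, I would construct the flip $\phi^+: X^+ \to Y$ by the standard combinatorial wall-flip in the fan and verify that the inequalities of Theorem~\ref{12} for canonical singularities are preserved, by directly comparing the lattice-point sets $T_1^\sigma$ and $T_2^\sigma$ in the two maximal cones on either side of $\tau$.

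Termination is inherited from the termination of the classical toric MMP, since only finitely many combinatorial configurations of fans arise from a fixed initial fan under divisorial contractions and wall-flips. For the non-dicritical addendum, I would revisit each of the three cases and check, using the fan-theoretic characterization of non-dicriticality coming from the Klyachko data, that none of the operations can introduce a new dicritical component.

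\textbf{Expected main obstacle.} The most delicate step is the small/flip case: verifying that a wall-flip preserves the criterion in Theorem~\ref{12} requires careful lattice-point bookkeeping in the two maximal cones adjacent to the flipped wall, keeping track of which points lie in $V$ and which do not, and ensuring the flipped cones still satisfy $T_1^\sigma = \{0\}$ and $T_2^\sigma = \emptyset$. A secondary difficulty is the Mori fibration case, where the pullback conclusion rests on promoting the tangency of the single extremal curve to the containment $N_{0,\mR} \subset V$; this requires combining the relative Picard-rank-one property with the combinatorial tangency criterion for all vertical torus-invariant curves.
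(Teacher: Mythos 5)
Your overall architecture --- cone theorem, trichotomy of extremal contractions, existence and termination of flips, and case-by-case tracking of canonicity and non-dicriticality --- is the same as the paper's, which packages these steps into its two lemmas on contractions and flips and then invokes the argument of \cite[Lemma 10.10]{Spi20}. Your plan to re-verify preservation of canonical singularities under flips directly from the lattice-point criterion of Theorem \ref{12} is more explicit than what the paper does (it defers that point to Spicer), but that is a legitimate alternative rather than a problem.

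There is, however, a genuine gap in your fibre-type case. You claim that relative Picard rank one forces $N_{0,\mR}\subset V$, so that $\mF_V$ is pulled back along the Mori fibration itself. This is false: take $X=\mP^2$ with rays $e_1,e_2,-e_1-e_2$ and $V=\mR e_1$. Then $K_{\mF_V}=-D_{\langle e_1\rangle}$ is anti-ample, the unique extremal contraction is $\mP^2\to \mathrm{pt}$, every torus-invariant curve is $K_{\mF_V}$-negative and tangent to $\mF_V$ (by Lemma \ref{32}, $D_\omega$ is tangent precisely when $V\not\subset V_\omega$), and yet $N_{0,\mR}=N_\mR\not\subset V$. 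The point is that tangency of the contracted curves only yields the non-containments $V\not\subset V_\omega$, and no accumulation of such conditions can force $N_{0,\mR}$ into $V$. The paper's mechanism for the ``pulled back'' alternative is different: its first proposition in Section 3 observes that if $K_\mF=-\sum_{\rho\subset V}D_\rho$ is not nef then some ray must lie in $V$, and then constructs the quotient of $N_\mR$ by the span $V'$ of \emph{all} rays contained in $V$; the resulting dominant rational map to a lower-dimensional toric variety --- which in general is neither the extremal contraction nor even a morphism --- is the map along which $\mF$ is pulled back. You need to replace your fibration step by this argument (or something equivalent); as written, your step asserts a statement that the example above refutes.
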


\tocless{\section*{Acknowledgement}}

The author would like to thank Zhan Li and Hang Zhao for their informative discussion about the theory of foliations during the algebraic geometry seminar at the Southern University of Science and Technology. The author also would like to thank the anonymous referee for his or her helpful comments. \\

\section{Toric Foliations}

\begin{definition}
	\cite[Definition 2.1]{AD13} A foliation on a normal variety $X$ is a nonzero coherent saturated subsheaf $\mF \subsetneq T_X$ that is closed under the Lie bracket.
	
	The rank of the foliation is the rank of the sheaf $r(\mF)$.
	
	A foliated variety is a pair $(X, \mF)$ such that $X$ is a normal variety and $\mF$ is a foliation on $X$.
\end{definition}

Since $T_X$ is reflexive, it implies that $\mF$ is also reflexive.

\begin{definition}
	\cite[Definition 2.5]{AD13} Let $\mF$ be a foliation on a normal variety $X$. Consider the Pfaff field of rank $r$ on $X$ given by:
	$$\eta: \Omega_X^r \to \wedge^r (T_X^*) \to \wedge^r (\mF^*) \to \det(\mF)^* = \mO_X(K_\mF). $$
	The singular locus of $\mF$, denoted by $\text{Sing}(\mF)$, is the closed subscheme of $X$ whose ideal sheaf is the image of the map $\Omega_X^r \otimes \mO_X(-K_\mF) \to \mO_X$ induced by $\eta$. 
\end{definition}

\begin{definition}\label{203}
	A subvariety $W \subset X$ is said to be tangent to the foliation $\mF$ if the tangent space of $W$ factors through $\mF$ along $X-\text{Sing}(X) \cup \text{Sing}(\mF)$.
	
	$W$ is said to be invariant if $\mF$ factors through the tangent space of $W$. 
\end{definition}

\begin{definition}
	Let $X$ be a toric variety. A foliation on $X$ is called toric if the sheaf $\mF$ is torus equivariant. 
\end{definition}

To describe a toric foliation, we first need to establish a relation between the saturated subsheaves of $T_X$ and the subspaces $V$ of $N_\mC$. 

There are two different approaches to establish this relation and obtain \textbf{\hyperref[11]{Theorem \ref*{11}}}. We first recall the simpler one given by Pang in \cite{Pan15}. Then we present the alternative one using Klyachko's filtrations, 

\begin{proposition}\cite[Proposition 2.1.2]{Pan15}
	Let $X$ be a $\mQ$-factorial toric variety with its fan $\Sigma$ in the lattice $N$. We can associated to each $n \in N$ a torus invariant derivation on $\mC[M]$ by 
	$$\delta_n: \mC[M] \to \mC[M], \quad \chi^m \to \langle m , n \rangle \cdot \chi^m.$$
	The morphism $\delta: N \to \text{Der}(\mC[M]), n \to \delta_n$ gives an isomorphism: $$N \otimes \mathcal{O}_X \cong T_X(-\log D),$$
	where $D = \sum_{\rho \in \Sigma(1)} D_\rho$.
	Indeed, the definition of $\delta_n$ can be extended to $N_\mC$.
\end{proposition}

\begin{proposition}\cite[2.1.4--2.1.6]{Pan15}
	For a given subspace $V$ of $N_\mC$, there is a unique torus equivariant subsheaf $V \otimes \mathcal{O}_X \subset T_X(-\log D) \subset T_X$. Let $\mF_V$ be the saturation of $V \otimes \mathcal{O}_X$ in $T_X$. This establishes an equivalence between the category of toric foliations and the category of subspaces of $N_\mC$.
\end{proposition}
Remark that it should be $N_\mC$ instead of $N_\mR$ in Pang's thesis. 

To calculate the first Chern class, we need to write down the local generators of $\mF_V$.

For $\rho \in \Sigma(1)$, we have
$$\rho^{\vee} \cap M  \cong \mZ_{\geq 0} m_\rho \oplus \mZ m_2 \oplus \cdots \oplus  \mZ m_n$$
where $m_2, \cdots, m_n$ are generators of $\rho^{\perp} \cap M$ and $ \langle m_\rho , v_\rho \rangle =1$.

If $X$ is a smooth toric variety and $\rho$ is contained in a maximal cone $\sigma$, then $m_i$ can be chosen as primitive vectors of rays in the dual cone $\sigma^\vee \cap M$. This is not true in general when $X$ is $\mQ$-factorial. But we can always choose $m_i \in \sigma^\vee \cap M$ so that $\delta_{v}|_{U_\rho}$ can be extended to $U_\sigma$.

Nevertheless, we have $U_\rho = \Spec  \mC[\rho^{\vee} \cap M] \cong \mC^* \times \mC^{n-1} $ and ${T_X|}_{U_\rho}$ is generated by $$\frac{\partial}{\partial{\chi^{m_\rho}}}, \chi^{m_2} \frac{\partial}{\partial{\chi^{m_2}}} , \cdots , \chi^{m_n} \frac{\partial}{\partial{\chi^{m_n}}}.$$

\begin{proposition} \cite[Lemma 2.1.10]{Pan15} \label{211}
	For all $v \in N_\mC$, set $$\delta_{v}|_{U_\rho}= \langle m_\rho, v \rangle \chi^{m_\rho}\frac{\partial}{\partial{\chi^{m_\rho}}} + \sum_{i=2}^n \langle m_i, v \rangle \chi^{m_i}\frac{\partial}{\partial{\chi^{m_i}}}. $$
	If $\rho \subset V$, we extend $v_\rho$ to a basis $\{v_\rho, v_2, \cdots, v_r  \}$ of $V$. If $\rho \not\subset V$, we choose one basis $\{v_1, v_2, \cdots, v_r  \}$ of $V$. Then $\mF_V|_{U_\rho}$ is generated by 
	$$	\left\{
	\begin{array}{ll}
		\ds \frac{1}{\chi^{m_\rho}}\delta_{v_\rho} , \delta_{v_2}, \cdots, \delta_{v_r} &  \text{ if } \rho \subset V,\\[1em]
		\delta_{v_1} , \delta_{v_2}, \cdots, \delta_{v_r} & \text{ if } \rho \not\subset V.
	\end{array} \right. $$
\end{proposition}

Then we obtain \textbf{\hyperref[11]{Theorem \ref*{11}}}, which is also \cite[Theorem 2.1.8]{Pan15}.

Notice that although the number of local generators of $\mF_V$ over $U_\rho$ is equal to the rank of $V$, it is not true over $U_\sigma$. 

\begin{example}
	Let $X$ be the toric surface given by the cone $\sigma$ in $\mZ^2$. Let $V=\text{Span} \langle (1,0) \rangle$ be the subspace of $\mC^2$.
	$$
	\begin{tikzpicture}
		\filldraw[black!30!white] (0,0) -- (1,0) -- (1,2);
		\draw[thick,->] (0,0) -- (1,0);
		\draw[thick,->] (0,0) -- (1,2);
		\node[right] at (1,0.1) {$v_{\rho_1} = (1,0)$};
		\node[right] at (1,1.8) {$v_{\rho_2} =(1,2)$};
		\node[right] at (0.5,0.7) {$\sigma$};
	\end{tikzpicture}
	\quad \quad \quad \quad
	\begin{tikzpicture}
		\filldraw[black!30!white] (0,0) -- (2,-1) -- (0,1);
		\draw[thick,->] (0,0) -- (2,-1);
		\draw[thick,->] (0,0) -- (0,1);
		\node[right] at (0.4,0.8) {$(0,1)$};
		\node[right] at (2,-0.8) {$(2,-1)$};
		\node[right] at (0.1,0.2) {$\sigma^\vee$};
	\end{tikzpicture}	
	$$
	
	Since $U_{\rho_2} \cong \text{Spec } \mC[x^2 y^{-1}, x^{-2} y, x]$, we can choose $m_{\rho_2}=(1,0)$ and $m_2=(2,-1)$. By the previous proposition,  $\mF_V|_{U_{\rho_2}}$ is generated by $x \frac{\partial}{\partial x} + 2 \chi^{(2,-1)}\frac{\partial}{\partial \chi^{(2,-1)}}$.
	Similarly, $U_{\rho_1} \cong \text{Spec } \mC[x, y, y^{-1}]$ and $\mF_V|_{U_{\rho_1}}$ is generated by $\frac{\partial}{\partial x}$.  
	
	Over $U_0 \cong T$, $x \frac{\partial}{\partial x} + 2 \chi^{(2,-1)}\frac{\partial}{\partial \chi^{(2,-1)}}$ can be simplified as $2x \frac{\partial}{\partial x}$. So $\mF_V|_T$ is indeed generated by one vector field. But over $X = U_\sigma \cong \text{Spec } \mC[y , x , x^2y^{-1}]$, $\mF_V$ cannot be generated by a single vector field.
\end{example}

Next, we give an alternative proof of \textbf{\hyperref[11]{Theorem \ref*{11}}} using Klyachko's filtrations, which is more general and involved. The reader can safely skip to the last part of this section about singularities.

To study moduli problems, Klyachko proved in \cite{Kly90} and \cite{Kly91} that torus equivariant torsion-free sheaves on toric varieties can be described by multi-filtrations of vector spaces that satisfy certain compatibility conditions. This method was further developed by Perling in \cite{Per04a} and \cite{Per04b} to classify arbitrary torus equivariant coherent sheaves. Here, we briefly recall this combinatorial description following the lines of \cite{Per03}.

\begin{remark}
	In the above-mentioned papers, the toric variety $X$ is required to be smooth. However, many of the results can be extended to $\mathbb{Q}$-factorial toric varieties without difficulties. Recall that a toric variety $X$ is $\mathbb{Q}$-factorial if the corresponding fan $\Sigma$ is simplicial, namely the primitive vectors of rays in every maximal cone are linearly independent. For more details, see \cite{KS98} and \cite{CT22}.
\end{remark}

Let $X$ be a toric variety given by a fan $\Sigma$ in the lattice $N$ of rank $n$. Let $M= \Hom(N,\mathbb{Z})$ be the character lattice. For each cone $\sigma \in \Sigma$, let $S_\sigma := \sigma^{\vee} \cap M$ and denote by $U_\sigma := \Spec k[S_\sigma]$ the corresponding affine toric variety. We can define a preorder $\leq_\sigma$ on $M$ by setting $m \leq_\sigma m'$ if and only if $m'-m \in S_\sigma$. A $\sigma$-family $\hat{F}
^\sigma$ is a collection of vector spaces $\{F^\sigma_m\}_{m \in M}$ with a collection of linear maps $\chi^\sigma_{m,m'}:  F^\sigma_m  \to  F^\sigma_{m'} $ for $m \leq_\sigma m'$ such that $\chi^\sigma_{m,m}=1$ and $\chi^\sigma_{m,m''}= \chi^\sigma_{m',m''} \circ \chi^\sigma_{m,m'}$ for $m \leq_\sigma m' \leq_\sigma m''$.

Let $\mF$ be a torus equivariant sheaf on a toric variety $X$. For each cone $\sigma \in \Sigma$, the module $F^\sigma := \Gamma (U_\sigma, \mF)$ has a natural torus action induced by the torus action on $\mF$. So it can be decomposed as a $M$-graded $k[S_\sigma]$-module $\ds F^\sigma = \bigoplus_{m\in M} F^\sigma_m$. 
It is proved in \cite[Theorem 4.5]{Per03} that the category of $\sigma$-families is equivalent to the category of torus equivariant quasi-coherent sheaves on $U_\sigma$.

Now consider a pair of cones $\tau \prec \sigma$ in the fan $\Sigma$. Denote by $i_{\tau, \sigma}: U_\tau \hookrightarrow U_\sigma$ the natural inclusion. Then the pullback module $i_{\tau , \sigma}^* (F^\sigma) = F^\sigma \otimes _{k[S_\sigma] }  k[S_\tau]$ can be decomposed as a $M$-graded $k[S_\tau]$-module. We call $\{\hat{F}^\sigma\}_{\sigma \in \Sigma}$ a $\Sigma$-family \cite[Definition 4.8]{Per03} if for each pair $\tau \prec \sigma$, there exists an isomorphism of families $\eta_{\tau,\sigma}:i_{\tau , \sigma}^* (\hat{F}^\sigma) \cong \hat{F}^\tau$ such that for each triple $\rho \prec \tau \prec \sigma$ there is an equality $\eta_{\rho,\sigma} = \eta_{\rho,\tau} \circ i_{\rho , \tau}^*\eta_{\tau,\sigma}$.

\begin{proposition} \cite[Theorem 4.9]{Per03}
	The category of $\Sigma$-families is equivalent to the category of torus equivariant quasi-coherent sheaves on a smooth toric variety $X$.
\end{proposition}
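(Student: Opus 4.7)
The plan is to reduce this global statement to the affine case already recalled above and then perform Zariski gluing equivariantly. More precisely, the equivalence between $\sigma$-families and torus equivariant quasi-coherent sheaves on $U_\sigma$ has already been invoked, so the remaining content is to show that a $\Sigma$-family is exactly the data needed to glue these local pieces into a torus equivariant quasi-coherent sheaf on $X$, and that every such sheaf arises this way.

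First, I would set up the functor in one direction. Given a torus equivariant quasi-coherent sheaf $\mF$ on $X$, set $F^\sigma := \Gamma(U_\sigma,\mF)$ with its induced $M$-grading and structure maps $\chi^\sigma_{m,m'}$. For a face $\tau \prec \sigma$, the inclusion $i_{\tau,\sigma}:U_\tau \hookrightarrow U_\sigma$ is given at the level of rings by the localization $k[S_\sigma] \to k[S_\tau]$ obtained by inverting characters in $S_\sigma$ whose duals lie in $\tau^\perp$. Pulling back preserves the $M$-grading, so $i_{\tau,\sigma}^*F^\sigma$ is naturally a $\tau$-family, and the equivariant restriction morphism $\Gamma(U_\sigma,\mF) \to \Gamma(U_\tau,\mF)$ induces a canonical isomorphism $i_{\tau,\sigma}^* \hat F^\sigma \simeq \hat F^\tau$ of $\tau$-families. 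This produces a $\Sigma$-family, functorially in $\mF$.

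In the other direction, given a $\Sigma$-family $\{\hat F^\sigma\}_{\sigma \in \Sigma}$, the local equivalence yields torus equivariant quasi-coherent sheaves $\mF^\sigma$ on each $U_\sigma$. For $\tau \prec \sigma$ the compatibility $i_{\tau,\sigma}^*\hat F^\sigma \simeq \hat F^\tau$ translates, via the local equivalence, into an equivariant isomorphism $\mF^\sigma|_{U_\tau} \simeq \mF^\tau$. Since any two cones $\sigma,\sigma'$ share the common face $\sigma \cap \sigma'$ and $U_{\sigma \cap \sigma'} = U_\sigma \cap U_{\sigma'}$, these isomorphisms give gluing data on the overlaps. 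The cocycle condition on triple intersections follows from the functoriality $\chi^\sigma_{m,m''} = \chi^\sigma_{m',m''} \circ \chi^\sigma_{m,m'}$ together with the naturality of the localization isomorphisms. One then glues to obtain a quasi-coherent sheaf on $X$, and the torus action on each piece is compatible with the gluing by construction, giving an equivariant sheaf.

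Finally, one checks that the two constructions are mutually quasi-inverse: going sheaf $\to \Sigma$-family $\to$ sheaf recovers $\mF$ because a quasi-coherent sheaf is determined by its sections on an affine open cover together with the restriction maps, while going $\Sigma$-family $\to$ sheaf $\to \Sigma$-family recovers the original family again by the local equivalence on each $U_\sigma$. The step I expect to be most delicate is verifying that the abstract compatibility isomorphisms in the definition of a $\Sigma$-family really yield honest gluing data satisfying the cocycle condition on $U_\sigma \cap U_{\sigma'} \cap U_{\sigma''}$; this requires unwinding the identification $i_{\tau,\sigma}^*F^\sigma = F^\sigma \otimes_{k[S_\sigma]} k[S_\tau]$ carefully at the level of $M$-graded pieces and using that the localizations $k[S_\sigma] \to k[S_\tau]$ are themselves compatible on faces of faces.
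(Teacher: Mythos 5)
The paper gives no proof of this proposition — it is quoted from \cite{Per03} — and your affine-local-equivalence-plus-equivariant-gluing argument is exactly the standard proof given there, so the approach is correct and matches the source. One small correction: the cocycle condition on triple overlaps comes from the compatibility of the face-restriction isomorphisms along chains of faces $\upsilon \prec \tau \prec \sigma$ (which is part of the data of a $\Sigma$-family), not from the relation $\chi^\sigma_{m,m''} = \chi^\sigma_{m',m''}\circ\chi^\sigma_{m,m'}$, which lives inside a single $\sigma$-family.
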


Since a foliation $\mF$ is reflexive, this combinatorial description can be further improved in the following way. Denote by $\sigma(1)$ the set of rays in the cone $\sigma$. For $\rho \in \sigma(1)$, denote by $v_\rho$ the primitive vector. Notice that $ \bigcup _{\rho \in \sigma(1)} U_\rho$ is an open subset of the affine toric variety $U_\sigma$ whose codimension is at least two. Hence $\Gamma(U_\sigma, \mF) = \Gamma(\bigcup _{\rho \in \sigma(1)} U_\rho, \mF)$. If we consider $\Gamma( U_\rho, \mF)$ as vector subspaces of $\Gamma(U_0, \mF)$, where $U_0$ is the torus $T$, then 
$$\Gamma(U_\sigma, \mF) \cong \bigcap_{\rho \in \sigma(1)} \Gamma( U_\rho, \mF). $$
Since $F^\rho := \Gamma (U_\rho, \mF)$ can be decomposed as $\ds F^\rho = \bigoplus_{m\in M} F^\rho_m$, we have $F^\sigma_m \cong \bigcap_{\rho \in \sigma(1)} F^\rho_m$.

Suppose the rank of $\mF$ is $r$. Since  
$\chi^\rho_{m,m'}:  F^\rho_m  \to  F^\rho_{m'}$ is an isomorphism if and only if $\langle m' -m , v_\rho \rangle = 0$, there is an identification of $M/\rho^{\perp}$ with $\mZ$ via the map $m \to \langle m , v_\rho \rangle $. We can then set the subspace $F^\rho (i) \subseteq \mC^r$ as $F^\rho_m$ wherever $\langle m , v_\rho \rangle = i$. Therefore we get an increasing filtration $\{\hat{F}^\rho\}$:
$$ 0 \subseteq \cdots \subseteq F^\rho (i) \subseteq F^\rho (i+1) \subseteq \cdots \subseteq \mC^r. $$

\begin{proposition} \cite[Theorem 4.21]{Per03}
	The category of multi-filtrations of vector spaces associated to each ray in $\Sigma(1)$ is equivalent to the category of torus equivariant reflexive sheaves on a $\mQ$-factorial toric variety $X$.
\end{proposition}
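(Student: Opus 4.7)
The plan is to build the equivalence on top of the preceding proposition identifying $\Sigma$-families with torus equivariant quasi-coherent sheaves, by restricting to the reflexive subcategory and then cutting down the combinatorial data using the simplicial assumption. Concretely, I would organize the argument in three steps: (i) show that on each affine chart $U_\sigma$ a reflexive equivariant sheaf is completely recovered from its restriction to the codimension-two open $U_\sigma^\circ := \bigcup_{\rho \in \sigma(1)} U_\rho$; (ii) show that on a one-dimensional $U_\rho$ the $\rho$-family is the same datum as a single increasing $\mZ$-indexed filtration of $\mC^r$; (iii) show that in the $\mQ$-factorial (i.e. simplicial) setting the obvious gluing across rays of a maximal cone is automatic, so no further compatibility beyond the multi-filtration is needed.

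For step (i), I would invoke the standard fact that for a normal variety $X$ and a reflexive sheaf $\mF$, restriction to any open subset whose complement has codimension $\geq 2$ is an isomorphism on sections. Because $\sigma$ is simplicial, the complement of $U_\sigma^\circ$ in $U_\sigma$ is a union of torus orbits of codimension $\geq 2$, so $\Gamma(U_\sigma, \mF) \cong \bigcap_{\rho \in \sigma(1)} \Gamma(U_\rho, \mF)$ holds weight-by-weight, matching the identity $F^\sigma_m \cong \bigcap_{\rho \in \sigma(1)} F^\rho_m$ already recorded. For step (ii), I would use that $\rho^\vee$ is a half-space with boundary hyperplane $\{ \langle m, v_\rho \rangle = 0 \}$; hence for $m \leq_\rho m'$ the structure map $\chi^\rho_{m,m'}$ is an isomorphism whenever $\langle m, v_\rho \rangle = \langle m', v_\rho \rangle$, so the $\rho$-family collapses to the increasing filtration $F^\rho(i) := F^\rho_m$ for any $m$ with $\langle m,v_\rho \rangle = i$.

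For step (iii), starting from an arbitrary collection of increasing filtrations $\{F^\rho(i) \subseteq \mC^r\}_{\rho \in \Sigma(1), i \in \mZ}$ (each stabilizing to $\{0\}$ for $i \ll 0$ and to $\mC^r$ for $i \gg 0$), I would define $F^\sigma_m := \bigcap_{\rho \in \sigma(1)} F^\rho(\langle m, v_\rho \rangle)$ with the tautological inclusion maps as $\chi^\sigma_{m,m'}$, check that this assembles into a $\Sigma$-family, and verify that the associated equivariant quasi-coherent sheaf is reflexive because it agrees with the pushforward from $U_\sigma^\circ$. Functoriality and the inverse direction are then formal: a morphism of reflexive toric sheaves induces compatible filtered maps on each $F^\rho(i)$, and conversely compatible filtered maps extend uniquely by intersection.

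The main obstacle, and the place where the argument genuinely uses $\mQ$-factoriality rather than smoothness, is step (iii). In the smooth case treated by Klyachko and Perling the primitive vectors in a maximal cone form a $\mZ$-basis, so compatibility on $U_\sigma$ reduces transparently to a tensor-product statement on the ray filtrations. In the simplicial but non-smooth case, the primitive vectors are only $\mQ$-linearly independent, and one must verify that the intersection formula still yields a well-defined $M$-graded $k[S_\sigma]$-module whose double dual equals itself; this is exactly the content worked out in \cite{CT22}, and I would cite their computation rather than redoing it. In the non-simplicial case this step would fail, since linear relations among the $v_\rho$ force nontrivial constraints among the ray-wise filtrations that are not captured by the multi-filtration data alone.
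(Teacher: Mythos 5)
Your three-step reduction (extension of reflexive sheaves across the codimension-two complement of $\bigcup_{\rho\in\sigma(1)}U_\rho$, collapse of the $\rho$-family to a single $\mZ$-filtration because $\chi^\rho_{m,m'}$ is an isomorphism when $\langle m'-m,v_\rho\rangle=0$, and the intersection formula $F^\sigma_m=\bigcap_{\rho\in\sigma(1)}F^\rho(\langle m,v_\rho\rangle)$ as the inverse functor) is exactly the argument the paper sketches before citing \cite{Per03}, so your proposal is correct and takes essentially the same route. One correction to your final paragraph: the simplicial hypothesis is not what makes step (iii) work, and the equivalence does not fail for non-simplicial fans --- the inverse functor is pushforward from the codimension-two open set $\bigcup_\rho U_\rho$, which exists for any normal toric variety, and the resulting graded module is finitely generated because it is sandwiched between two translates of $\mC^r\otimes k[S_\sigma]$ over the Noetherian ring $k[S_\sigma]$; linear relations among the $v_\rho$ impose no extra constraints on the ray filtrations. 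The $\mQ$-factoriality assumption in the paper is carried along for the later results (Chern class formula, explicit local generators), not for this classification.
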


By \cite[Corollary 2.2.17]{DDK19}, the multi-filtrations associated to $T_X$ on a $\mQ$-factorial toric variety $X$ are given by
$$ {T}^\rho(i) = \left\{
\begin{array}{ll}
	0  &  i \leq - 2, \\
	\text{Span} \langle v_\rho \rangle & i=-1, \\
	N_\mC & i \geq 0.
\end{array}
\right.$$

Indeed, ${T_X|}_{U_\rho}$ is generated by 
$$\frac{\partial}{\partial{\chi^{m_\rho}}}, \chi^{m_2} \frac{\partial}{\partial{\chi^{m_2}}} , \cdots , \chi^{m_n} \frac{\partial}{\partial{\chi^{m_n}}}.$$
We can see that $\text{Span} \langle v_\rho \rangle$ corresponds to $\frac{\partial}{\partial{\chi^{m_\rho}}}$ as $ \langle -m_\rho , v_\rho \rangle =-1$ and $ \langle m_i , v_\rho \rangle = 0$.

Since foliations are saturated subsheaves of $T_X$, we can characterize them in terms of multi-filtrations.

\begin{proposition} \cite[Proposition 3.0.1]{DDK19}
	Let $\mE$ be an equivariant reflexive sheaf on a $\mQ$-factorial toric variety $X$ with multi-filtrations $\{E^\rho\}_{\rho \in \Sigma(1)}$ of the vector space $E$. Then there is a one-to-one correspondence between the equivariant saturated subsheaf $\mF$ of $\mE$ and multi-filtrations of the subspace $F \subseteq E$ with $F^\rho(i) = E^\rho(i) \cap F$. 
\end{proposition}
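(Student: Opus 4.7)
The plan is to leverage the equivalence of categories from the previous proposition, which identifies equivariant reflexive sheaves on $X$ with multi-filtrations. Given an equivariant saturated subsheaf $\mF \hookrightarrow \mE$, note that $\mF$ is itself reflexive (a saturated subsheaf of a reflexive sheaf on a normal variety is reflexive), so $\mF$ corresponds to multi-filtrations $\{F^\rho(i)\}$ of its generic stalk $F$, which is a subspace of $E$. The inclusion $\mF \hookrightarrow \mE$ immediately forces $F^\rho(i) \subseteq E^\rho(i)$, and since $F^\rho(i)$ stabilizes to $F$ for $i \gg 0$, we have $F^\rho(i) \subseteq E^\rho(i) \cap F$ for every $i$. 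The substance of the statement is therefore to prove that saturation of $\mF$ is equivalent to all these inclusions being equalities.

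For the ``only if'' direction, suppose $\mF$ is saturated and take any $v \in E^\rho(i) \cap F$. Then $v$ arises as the generic value of an equivariant local section $s$ of $\mE$ on $U_\rho$ of weight $m$ with $\langle m, v_\rho \rangle = i$. Its image $[s]$ in the quotient sheaf $\mathcal{Q} := \mE/\mF$ is generically zero because $v$ lies in $F$, and hence $[s]$ is a torsion section of the equivariant quotient sheaf. Saturation forces $\mathcal{Q}$ to be torsion-free, so $[s] = 0$ and $s$ is actually a section of $\mF$, giving $v \in F^\rho(i)$ as required.

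Conversely, given a subspace $F \subseteq E$, the filtrations $F^\rho(i) := E^\rho(i) \cap F$ form valid increasing filtrations that stabilize to $F$, and hence by the preceding proposition determine an equivariant reflexive sheaf $\mF$ with a natural inclusion $\mF \hookrightarrow \mE$. To see that $\mF$ is saturated in $\mE$, I would examine the multi-filtrations of the quotient $\mathcal{Q} = \mE/\mF$: at each ray $\rho$ they are $E^\rho(i)/(E^\rho(i) \cap F)$, which embed into $E/F$. This injectivity means that no equivariant local section of $\mE$ on $U_\rho$ with generic value in $F$ fails to lie in $\mF$, so $\mathcal{Q}$ has no equivariant torsion at the generic point of any toric divisor $D_\rho$. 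Since both sheaves are reflexive on the $\mQ$-factorial toric variety $X$ and are therefore determined in codimension one, this rules out all equivariant torsion in $\mathcal{Q}$, and $\mF$ is saturated.

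The main obstacle is translating saturation --- a condition about torsion in the full sheaf quotient $\mE/\mF$ --- into the purely combinatorial condition $F^\rho(i) = E^\rho(i) \cap F$. This translation hinges on two facts: that equivariant torsion of a quotient of reflexive toric sheaves can only be supported on torus-invariant divisors, and that the extension of Klyachko's theory to the $\mQ$-factorial setting in \cite{CT22} lets us detect such torsion ray-by-ray. Without either ingredient --- for instance, dropping $\mQ$-factoriality, or allowing non-saturated subsheaves --- the clean bijection would break down.
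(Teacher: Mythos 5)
The paper itself gives no proof of this proposition --- it is quoted directly from \cite{HNS19} and \cite{DDK19} --- so there is no in-text argument to compare yours against; your write-up is in effect supplying the missing proof, and it is correct and follows the standard route of those references. The forward direction (saturation of $\mF$ forces $F^\rho(i)=E^\rho(i)\cap F$ because a section with generic value in $F$ becomes a torsion class in $\mE/\mF$) is fine. In the converse, the one step I would ask you to tighten is the final sentence ``this rules out all equivariant torsion'': checking the graded pieces $E^\rho(i)/(E^\rho(i)\cap F)\hookrightarrow E/F$ only excludes torsion at the generic points of the divisors $D_\rho$ (and equivariance is what guarantees that any codimension-one component of the torsion support is such a $D_\rho$), so you still owe an argument for torsion supported in codimension at least two. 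The clean way to finish is either to note that if $\mF'$ denotes the saturation of $\mF$ in $\mE$, then $\mF'/\mF$ is torsion, so $\mF'$ has the same generic fibre $F$, and by your forward direction its filtrations are again $E^\rho(i)\cap F=F^\rho(i)$, whence $\mF'=\mF$ by the equivalence of categories for equivariant reflexive sheaves; or, equivalently, to observe that $\mF\subseteq\mF'$ agree outside codimension two and $\mF'\subseteq(\mF')^{**}=\mF^{**}=\mF$ since $\mF$ is reflexive. With that one sentence added, the argument is complete.
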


Hence for any given subspace $V \subseteq N_\mC$, we get a unique toric foliation $\mF_V$. 

\begin{proposition}
	The multi-filtrations corresponding to the foliation $\mF_V$ are given by 
	$$ {F_V^\rho}(i) = \left\{
	\begin{array}{ll}
		0  &  i \leq -2, \\
		\normalfont{\text{Span}} \langle v_\rho \rangle & i=-1, \\
		V & i \geq 0,
	\end{array} \right. \quad \text{ if } \rho \subset V,$$
	$$ {F_V^\rho}(i) = \left\{
	\begin{array}{ll}
		0  &  i \leq -1, \\
		V & i \geq 0,
	\end{array} \right. \quad \text{ if } \rho \not\subset V.$$
\end{proposition}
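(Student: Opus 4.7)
The strategy is to apply the preceding proposition with $\mE = T_X$ and $F = V \otimes_\mR \mC$, combined with Klyachko's explicit description of the filtrations for the tangent sheaf recalled earlier in the section. That proposition immediately yields
$$F_V^\rho(i) = T^\rho(i) \cap (V \otimes_\mR \mC)$$
for every ray $\rho \in \Sigma(1)$ and every integer $i$, so the entire computation reduces to intersecting each of the three pieces $0$, $\text{Span}_\mC \langle v_\rho \rangle$, and $N \otimes_\mZ \mC$ with $V \otimes_\mR \mC$ ray by ray.

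I would then split into two cases depending on whether $v_\rho \in V$. If $\rho \subset V$, then $\text{Span}_\mC \langle v_\rho \rangle \subseteq V \otimes_\mR \mC$, so the intersection simply replaces the full ambient space at $i \geq 0$ by $V \otimes_\mR \mC$ while preserving the $i = -1$ stratum, which yields the first case of the formula. If $\rho \not\subset V$, the only non-formal input is the vanishing $\text{Span}_\mC \langle v_\rho \rangle \cap (V \otimes_\mR \mC) = 0$; once this is established, the filtration jumps only from $0$ to $V \otimes_\mR \mC$ between $i = -1$ and $i = 0$, which is the second case.

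The one point that requires genuine (if minor) justification, and hence the main obstacle, is the intersection vanishing in the second case. This uses crucially that $V$ is a \emph{real} subspace of $N_\mR$ and that $v_\rho$ is a real vector: if $c \cdot v_\rho \in V \otimes_\mR \mC$ with $c = a + b i \in \mC$, then decomposing with respect to the natural real structure on $V \otimes_\mR \mC$ forces both $a v_\rho$ and $b v_\rho$ to lie in $V$, so $a = b = 0$ since $v_\rho \notin V$. Once this observation is in place, assembling the cases produces exactly the two filtrations in the statement.
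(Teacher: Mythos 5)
Your proposal is correct and follows exactly the route the paper intends: the statement is an immediate application of the preceding proposition (saturated equivariant subsheaves correspond to the intersected filtrations $F^\rho(i)=T^\rho(i)\cap F$) together with Klyachko's filtrations for $T_X$, and your case analysis of the intersections, including the real-structure argument for $\operatorname{Span}_\mC\langle v_\rho\rangle\cap(V\otimes_\mR\mC)=0$ when $\rho\not\subset V$, fills in the only detail the paper leaves implicit.
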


To determine the first Chern class of $\mF_V$, we need a combinatorial formula given by \cite{Koo11} for smooth toric varieties and by \cite{CT22} for $\mQ$-factorial toric varieties.

\begin{proposition} \cite[Corollary 2.18]{CT22} \label{29}
	Let $\mF$ be an equivariant reflexive sheaf on a $\mQ$-factorial toric variety $X$ with associated multi-filtrations $\{\hat{F}^\rho\}_{\rho \in \Sigma(1)}$. Then the first Chern class of $\mF$ is given by 
	$$c_1(\mF) = - \sum_{\rho \in \Sigma(1)} \sum_{i \in \mZ} i \dim F^{[\rho]}(i) D_\rho$$
	where $F^{[\rho]}(i) = F^\rho(i)/F^\rho(i-1)$ and $D_\rho$ is the Weil divisor corresponding to the ray $\rho$.
\end{proposition}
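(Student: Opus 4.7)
The plan is to reduce the computation to the rank-one case via the determinant and then extract the jump position from the Klyachko filtrations. Since $\mF$ is reflexive, one has $c_1(\mF) = c_1(\det \mF)$, where $\det\mF := (\wedge^r\mF)^{**}$ is an equivariant rank-one reflexive sheaf and therefore corresponds to a torus-invariant Weil divisor class on $X$. Thus the task reduces to reading off this divisor from the multi-filtration data of $\mF$.

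The first step is the rank-one dictionary. For an equivariant rank-one reflexive sheaf $L$ on $X$, the multi-filtration at each ray $\rho$ is forced to be a step function with a unique jump integer $k_\rho$ (that is, $L^\rho(i) = \mC$ for $i \geq k_\rho$ and $L^\rho(i) = 0$ otherwise). On the chart $U_\rho$, the corresponding module of sections is generated in degree $-k_\rho m_\rho$, where $\langle m_\rho, v_\rho\rangle = 1$, matching the local presentation of $\mO_X(-k_\rho D_\rho)$. Glueing over the codimension-two open cover $\bigcup_\rho U_\rho$, on which reflexive sheaves are determined, yields $L \cong \mO_X(-\sum_\rho k_\rho D_\rho)$ and hence $c_1(L) = -\sum_\rho k_\rho D_\rho$. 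The second step is to identify the jump integer $k_\rho^{\det}$ of $\det \mF$ at each ray. Working at the level of the fibre $F = \mC^r$ with its increasing filtration $\{F^\rho(i)\}$, the induced filtration on $\wedge^r F \cong \mC$ has a single jump at the smallest integer $j$ for which one can choose $r$ linearly independent elements whose degrees sum to $j$; this minimum is realised by using all $\dim F^{[\rho]}(i)$ newly available directions at each jump level $i$, and so equals $\sum_i i\dim F^{[\rho]}(i)$. Substituting into the rank-one formula yields the claimed expression for $c_1(\mF)$.

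The main obstacle is making the determinant construction compatible with the Klyachko formalism on a $\mQ$-factorial (non-smooth) variety. Concretely, one must verify that the equivalence between equivariant reflexive sheaves and multi-filtrations is compatible with the top-wedge-then-reflexivise operation $\mF \mapsto (\wedge^r \mF)^{**}$. This reduces, via the descent from $X$ to $\bigcup_\rho U_\rho$ (where the sheaf is locally free since $U_\rho$ is smooth), to the purely linear-algebraic statement about $\wedge^r$ of a filtered vector space used above, together with the observation that the transition maps $\chi^\rho_{m,m'}$ behave well under wedge products. Once this compatibility is in hand, the two steps combine to give the formula.
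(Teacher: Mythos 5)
This proposition is not proved in the paper at all: it is quoted verbatim from \cite{CT22}, so there is no internal argument to compare against. Your proof is correct and is essentially the standard argument used in that literature (Klyachko/Kool/Clarke--Tipler): reduce to $\det\mF=(\wedge^r\mF)^{**}$, identify a rank-one equivariant reflexive sheaf with jump $k_\rho$ at each ray with $\mO_X(-\sum_\rho k_\rho D_\rho)$, and observe that on each chart $U_\rho$ the sheaf splits equivariantly so that the determinant's jump is $\sum_i i\dim F^{[\rho]}(i)$; the only blemish is the sign in ``generated in degree $-k_\rho m_\rho$,'' which with the paper's convention $F^\rho(i)=F^\rho_m$ for $\langle m,v_\rho\rangle=i$ should be $k_\rho m_\rho$, though this does not affect the conclusion.
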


Now we obtain \textbf{\hyperref[11]{Theorem \ref*{11}}}.
\begin{proof}
	The calculation of the first Chern class can be easily seen from the previous two propositions. For the second part, notice that $\frac{\partial}{\partial{\chi^{m_\rho}}}$ is one of the generators of ${\mF_V|}_{U_\rho}$ if $\rho \subset V$.
\end{proof}

\begin{example}
	Consider the affine variety $\mC^3$ whose fan is given by $v_1=(1,0,0), v_2=(0,1,0), v_3=(0,0,1)$.
	
	Let $V_1$ be the vector space generated by $v_1+v_2, v_3$. Then $F_{V_1}$ is the foliation generated by $x \frac{\partial}{\partial x}+ y \frac{\partial}{\partial y}, \frac{\partial}{\partial z}$. The multi-filtrations are given by 
	$$ {F_V^{\rho_j}}(i) = \left\{
	\begin{array}{ll}
		0  &  i \leq -1, \\
		V_1 & i \geq 0,
	\end{array} \right. \quad j=1 \text{ or } 2 ,$$
	$$ {F_V^{\rho_3} }(i) = \left\{
	\begin{array}{ll}
		0  &  i \leq -2, \\
		\normalfont{\text{Span}} \langle v_3 \rangle & i=-1, \\
		V_1& i \geq 0.
	\end{array} \right. $$
	The canonical divisor is $K_{\mF_{V_1}}=-D_3$.
\end{example}

Next, we investigate toric foliations in terms of differential forms.

Given a foliation $\mF \subsetneq T_X$ of rank $r$, we set $\cN_\mF^* := (T_X/\mF)^*$ and $\cN_\mF :=(\cN_\mF^*)^*$, which are called the conormal sheaf and normal sheaf of $\mF$ respectively. Notice that $\cN_\mF^*$ is a reflexive subsheaf of $\Omega_X^1$ of rank $n-r$. Hence we can describe it in terms of Klyachko filtrations.

By \cite[Proposition 2.2.16]{DDK19}, the multi-filtrations associated to $\Omega_X^1$ on a $\mQ$-factorial toric variety $X$ is given by
$$ {\Omega}^\rho(i) = \left\{
\begin{array}{ll}
	0  &  i \leq -1, \\
	\text{Span} \langle v_\rho \rangle^\perp & i=0, \\
	M_\mC & i \geq 1.
\end{array}
\right.$$
Indeed, ${\Omega_X|}_{U_\rho}$ is generated by
$$\frac{ 1 }{\chi^{m_2}} d\chi^{m_2}, \cdots ,\frac{ 1 }{\chi^{m_n}}  d\chi^{m_n} ,  d{\chi^{m_\rho}}.$$
We can see that $\text{Span} \langle v_\rho \rangle ^\perp$ corresponds to $\frac{ 1 }{\chi^{m_2}} d\chi^{m_2}, \cdots ,\frac{ 1 }{\chi^{m_n}}  d\chi^{m_n}$.

Hence for any given subspace $W \subset M_\mC$, we get a unique conormal sheaf $\cN_{\mF_W}^*$.

\begin{proposition}
	The multi-filtrations corresponding to the conormal sheaf $\cN_{\mF_W}^*$ are given by 
	$$ (N_{\mF_W}^*)^\rho (i) = \left\{
	\begin{array}{ll}
		0  &  i \leq -1, \\
		\normalfont{\text{Span}} \langle v_\rho \rangle ^\perp \cap W & i= 0, \\
		W & i \geq 1,
	\end{array} \right. \quad \text{ if } W \not\subset \rho^\perp,$$
	$$ (N_{\mF_W}^*)^\rho (i) = \left\{
	\begin{array}{ll}
		0  &  i \leq -1, \\
		W & i \geq 0,
	\end{array} \right. \quad \text{ if } W \subset \rho^\perp.$$
\end{proposition}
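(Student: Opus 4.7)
The plan is to apply the saturated-subsheaf correspondence of \cite{HNS19} and \cite{DDK19} to the reflexive sheaf $\Omega_X^1$, whose Klyachko multi-filtration has just been recalled, and then compute term by term. By construction $\cN_{\mF_W}^*$ is the saturated equivariant subsheaf of $\Omega_X^1$ attached to the subspace $W \otimes_\mR \mC$ of the ambient vector space $M \otimes_\mZ \mC$, so the correspondence yields
$$(N_{\mF_W}^*)^\rho(i) \;=\; \Omega^\rho(i) \,\cap\, (W \otimes_\mR \mC)$$
for every ray $\rho \in \Sigma(1)$ and every $i \in \mZ$. This reduces the proposition to an elementary intersection computation.

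With this identification in hand, I would split into cases according to whether $W$ is contained in $\rho^\perp = \text{Span}\langle v_\rho \rangle^\perp$. For $i \leq -1$ the ambient term is $0$, so the intersection is $0$ in both cases. For $i \geq 1$ the ambient term is all of $M \otimes_\mZ \mC$, so the intersection is $W \otimes_\mR \mC$ in both cases. The only level that sees the case split is $i = 0$, where the ambient term is $\text{Span}\langle v_\rho \rangle^\perp$: the intersection with $W$ equals all of $W$ precisely when $W \subset \rho^\perp$, and otherwise equals the proper subspace $\text{Span}\langle v_\rho \rangle^\perp \cap W$. These two outcomes reproduce, respectively, the second and the first of the displayed filtrations.

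There is essentially no obstacle here; the whole argument is a direct transcription of the \cite{HNS19}, \cite{DDK19} correspondence followed by term-by-term intersection with the recalled filtration on $\Omega_X^1$. The only verification worth spelling out is that in the case $W \not\subset \rho^\perp$ the intersection $\text{Span}\langle v_\rho \rangle^\perp \cap W$ is strictly smaller than $W$, so that the filtration has genuine jumps both at $i=0$ and at $i=1$ and does not collapse to a single step — which is exactly the dichotomy encoded in the case split.
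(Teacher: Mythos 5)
Your proof is correct and matches the paper's (implicit) argument exactly: the paper states this proposition without proof, as an immediate application of the cited correspondence $F^\rho(i) = E^\rho(i) \cap F$ from \cite{HNS19}, \cite{DDK19} to the recalled filtration of $\Omega_X^1$, which is precisely the term-by-term intersection you carry out.
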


\begin{theorem} \label{213}
	Let $W$ be a subspace of $M_\mC$. Then the first Chern class of the toric conormal sheaf $\cN_{\mF_W}^*$ is $-\sum_{W \not\subset \rho^\perp} D_\rho$. Hence the canonical divisor of the toric foliation $\mF_W$ is given by $K_{\mF_W} = -\sum_{W \subset \rho^\perp} D_\rho$.
\end{theorem}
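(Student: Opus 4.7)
The plan is to apply Proposition~\ref{29} directly to the multi-filtrations of $\cN_{\mF_W}^*$ listed in the preceding proposition, and then deduce $K_{\mF_W}$ from the tautological sequence $0\to\mF_W\to T_X\to\cN_{\mF_W}\to 0$ together with the toric identity $c_1(T_X)=\sum_{\rho\in\Sigma(1)}D_\rho$.

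First I would inspect the graded pieces $(N_{\mF_W}^*)^{[\rho]}(i)=(N_{\mF_W}^*)^\rho(i)/(N_{\mF_W}^*)^\rho(i-1)$ ray by ray. When $W\subset\rho^\perp$ the filtration has a single jump at weight $0$, contributing nothing to the formula in Proposition~\ref{29}. When $W\not\subset\rho^\perp$ the nontrivial jumps occur at weights $0$ and $1$; the weight-$0$ piece is again irrelevant, while the weight-$1$ piece $W/(\text{Span}\langle v_\rho\rangle^\perp\cap W)$ is one-dimensional because $\langle\cdot,v_\rho\rangle$ is a nonzero linear form on $W$. This supplies the only contribution from $\rho$, yielding $c_1(\cN_{\mF_W}^*)=-\sum_{W\not\subset\rho^\perp}D_\rho$.

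To pass from the conormal sheaf to the foliation itself, I would invoke the sequence $0\to\mF_W\to T_X\to\cN_{\mF_W}\to 0$. Since $T_X/\mF_W$ coincides with its reflexive hull $\cN_{\mF_W}$ outside a codimension-two subset of $X$, additivity of $c_1$ gives
$$c_1(\mF_W)=c_1(T_X)-c_1(\cN_{\mF_W})=\sum_{\rho\in\Sigma(1)}D_\rho-\sum_{W\not\subset\rho^\perp}D_\rho=\sum_{W\subset\rho^\perp}D_\rho,$$
whence $K_{\mF_W}=-\sum_{W\subset\rho^\perp}D_\rho$.

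The only delicate point is the additivity of $c_1$ along this sequence, whose right-hand term is $\cN_{\mF_W}$ rather than the naive quotient. This is standard on a $\mQ$-factorial toric variety: all three sheaves are reflexive, so their first Chern classes are determined on the open locus where the sequence is actually exact, and the complement of this locus has codimension at least two. Everything else is a bookkeeping exercise in the combinatorial formula of Proposition~\ref{29}.
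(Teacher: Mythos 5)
Your proposal is correct and follows essentially the same route as the paper: the paper likewise computes $c_1(\cN_{\mF_W}^*)$ by feeding the conormal multi-filtrations into Proposition~\ref{29} (your ray-by-ray analysis of the graded pieces is exactly the implicit computation) and then passes to $K_{\mF_W}$ via the identity $\mO(K_X)=\mO(K_{\mF})\otimes\det(\cN_{\mF}^*)$, which is just the determinant of the generically exact sequence you invoke. Your extra remarks on why additivity of $c_1$ is legitimate (saturation of $\mF_W$, agreement of the quotient with $\cN_{\mF_W}$ in codimension one) only make explicit what the paper leaves to the reader.
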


\begin{proof}
	Notice that $\mO(K_X) = \mO(K_\mF) \otimes \det(\cN_\mF^*)$. The result then follows from \textbf{\hyperref[29]{Proposition \ref*{29}}}.
\end{proof}

Notice that $W \subset \rho^\perp$ if and only if $\rho \subset W^\perp$. Hence if $V^\perp=W$, we also obtain \textbf{\hyperref[11]{Theorem \ref*{11}}}. 

Next, we describe the singular locus of a toric foliation $\mF_V$.

\begin{theorem} \label{214}
	Let $\mF_V$ be a toric foliation on a $\mathbb{Q}$-factorial toric variety $X$ determined by a subspace $V$ of $N_\mC$. Consider the affine chart $U_\rho$ determined by the ray $\rho \in \Sigma(1)$. If $\rho \subset V$, we extend $v_\rho$ to a basis $\{v_\rho, v_2, \cdots, v_r  \}$ of $V$. If $\rho \not\subset V$, we choose one basis $\{v_1, v_2, \cdots, v_r  \}$ of $V$. Define the $r \times n$ matrix $A_{U_\rho}$ as 
	$$	\left\{
	\begin{array}{ll}
		$$\begin{bmatrix}
			1 & \cdots & 0  & \cdots & 0\\
			\langle m_\rho, v_2 \rangle \chi^{m_\rho} & \cdots & \langle m_i, v_2 \rangle \chi^{m_i} & \cdots & \langle m_n, v_2 \rangle \chi^{m_n}\\
			&\ddots & & & \vdots \\
			\langle m_\rho, v_r \rangle \chi^{m_\rho} & \cdots & \langle m_i, v_r \rangle \chi^{m_i} & \cdots & \langle m_n, v_r \rangle \chi^{m_n}\\
		\end{bmatrix}$$  &  \text{ if } \rho \subset V,\\[3em]
		$$\begin{bmatrix}
			\langle m_\rho, v_1 \rangle \chi^{m_\rho} & \cdots & \langle m_i, v_1 \rangle \chi^{m_i} & \cdots & \langle m_n, v_1 \rangle \chi^{m_n}\\
			&\ddots & & & \vdots \\
			\langle m_\rho, v_r \rangle \chi^{m_\rho} & \cdots & \langle m_i, v_r \rangle \chi^{m_i} & \cdots & \langle m_n, v_r \rangle \chi^{m_n}\\
		\end{bmatrix}$$  & \text{ if } \rho \not\subset V.
	\end{array} \right. $$
	Then the ideal sheaf of Sing($\mF_V$) on $U_\rho$ is determined by the ideal $I_\rho$ generated by all nonzero $r \times r$ minors of $A_{U_\rho}$.
\end{theorem}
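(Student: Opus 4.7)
The plan is to compute both sides directly using the local generators of $\mF_V$ furnished by Proposition \ref{211}. By the Klyachko description, on $U_\rho$ the tangent sheaf $T_X$ is a free $\mO_{U_\rho}$-module with basis $e_1 = \partial/\partial\chi^{m_\rho}$ and $e_i = \chi^{m_i}\partial/\partial\chi^{m_i}$ for $i \geq 2$. The expansion $\delta_v = \langle m_\rho, v\rangle\, \chi^{m_\rho}\, e_1 + \sum_{i \geq 2}\langle m_i, v\rangle\, e_i$ then shows that, in this basis, the generators of $\mF_V$ assemble into an $r \times n$ coefficient matrix $B_{U_\rho}$ which agrees with $A_{U_\rho}$ except that each entry of column $1$ coming from some $\delta_{v_j}$ carries an extra factor of $\chi^{m_\rho}$.

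Next I would unwind the Pfaff field $\eta : \Omega_X^r \otimes \mO_X(-K_{\mF_V}) \to \mO_X$ on $U_\rho$. Using the dual basis $\{d\chi^{m_\rho},\, \chi^{-m_i}d\chi^{m_i}\}$ of $\Omega_X^1|_{U_\rho}$, the image of $\eta$ is generated by the pairings $\omega_I(\delta_1, \ldots, \delta_r)$ for $I$ running over $r$-subsets of indices, and these pairings are precisely the $r \times r$ minors of $B_{U_\rho}$ (the line bundle $\mO_X(-K_{\mF_V})$ is trivial on the affine chart $U_\rho$). Hence the ideal sheaf of $\text{Sing}(\mF_V)$ on $U_\rho$ equals the ideal of $r \times r$ minors of $B_{U_\rho}$, and it only remains to show that this coincides with the minor ideal of $A_{U_\rho}$.

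When $\rho \subset V$, Proposition \ref{211} gives $\chi^{-m_\rho}\delta_{v_\rho} = e_1$, so row $1$ of both matrices is $(1, 0, \ldots, 0)$; subtracting from each row $j \geq 2$ the appropriate multiple of row $1$ clears column $1$ below the pivot in $B_{U_\rho}$ and in $A_{U_\rho}$ simultaneously, producing the same reduced matrix and hence the same minor ideal. When $\rho \not\subset V$, the constant $r \times (n-1)$ submatrix $(\langle m_i, v_j\rangle)_{i \geq 2,\, j}$ represents the linear map $V \to N_\mR/\langle v_\rho\rangle \cong (\rho^\perp)^*$; since $\rho \not\subset V$ forces $V \cap \langle v_\rho\rangle = 0$, this map is injective with rank $r$, so some $r \times r$ minor avoiding column $1$ is a nonzero scalar, which lies in both minor ideals and forces them both to equal $(1)$. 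The main subtle point is this last case, where the two matrices genuinely differ in column $1$ by a factor of $\chi^{m_\rho}$ and it is the injectivity of $V \hookrightarrow (\rho^\perp)^*$ that absorbs the discrepancy into a unit.
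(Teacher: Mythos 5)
Your proof is correct and follows the same route as the paper, whose entire argument for this theorem is the single sentence that the result follows from Proposition \ref{211}; you simply supply the details that the paper leaves implicit. In particular, your observation that the actual coefficient matrix of the generators differs from $A_{U_\rho}$ by a factor of $\chi^{m_\rho}$ in the first column, together with your verification (row reduction when $\rho \subset V$, and injectivity of $V \to N_\mR/\langle v_\rho \rangle$ when $\rho \not\subset V$) that the two ideals of $r \times r$ minors nevertheless coincide, is precisely the point the paper glosses over.
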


\begin{proof}
	The result follows from \textbf{\hyperref[211]{Proposition \ref*{211}}} by calculating the $r$th wedge product of vector fields.
\end{proof}

\begin{corollary} \label{215}
	The singular locus of a toric foliation $\mF_V$ is empty if and only if for every maximal cone $\sigma \in \Sigma$, the vector space $V$ can be generated purely by rays in $\sigma$.
\end{corollary}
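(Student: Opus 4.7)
The plan is to reduce the claim to checking the torus-fixed points via $T$-equivariance, and to verify the equivalence there using Proposition~\ref{211}. Since $\mF_V$ is $T$-equivariant, its singular locus is a $T$-invariant closed subscheme of $X$, hence a union of torus orbits. Combined with Theorem~\ref{214}, which describes $\text{Sing}(\mF_V) \cap U_\rho$ via the minors of $A_{U_\rho}$, the problem reduces to determining when $x_\sigma \in \text{Sing}(\mF_V)$ for each maximal cone $\sigma$. Note that $x_\sigma \notin U_\rho$ for any ray $\rho$, so Theorem~\ref{214} alone is insufficient and a direct analysis of the Pfaff field $\eta : \Omega^r_X \to \mO(K_\mF)$ at $x_\sigma$ is required.

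Fix a maximal cone $\sigma$ with primitive ray vectors $v_{\rho_1}, \ldots, v_{\rho_n}$, which are linearly independent by $\mQ$-factoriality. Let $I = \{k : \rho_k \subset V\}$; the vectors $\{v_{\rho_k}\}_{k \in I}$ are linearly independent in $V$, so $|I| \leq r$, with equality precisely when $V$ is generated by rays in $\sigma$. The claim reduces to showing $x_\sigma \notin \text{Sing}(\mF_V)$ if and only if $|I| = r$. Extend $\{v_{\rho_k}\}_{k \in I}$ to a basis of $V$ by additional vectors $w_1, \ldots, w_{r-|I|}$ and apply Proposition~\ref{211} on each $U_{\rho_k}$; by reflexivity and the codimension-two property $\Gamma(U_\sigma, \mF_V) = \bigcap_{k} \Gamma(U_{\rho_k}, \mF_V)$, the generators of $\mF_V|_{U_\sigma}$ consist of the invariant vector fields $\delta_w$ for $w$ in the basis of $V$, together with the normalized vector fields $\chi^{-m_{\rho_k}}\delta_{v_{\rho_k}}$ for $k \in I$. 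Each $\delta_w$ vanishes at $x_\sigma$ because every summand $\langle m_i, w\rangle \chi^{m_i}\partial/\partial\chi^{m_i}$ involves $\chi^{m_i}$ with $m_i \in \sigma^\vee \setminus \{0\}$, so $\chi^{m_i}(x_\sigma) = 0$; by contrast the normalized fields extend to sections of $T_X$ that are nonzero at $x_\sigma$. When $|I| = r$, the $r$ normalized fields form a local framing of $\mF_V$ near $x_\sigma$, their wedge is a nowhere-vanishing section of $\det \mF_V \hookrightarrow \wedge^r T_X$, and $\eta$ is surjective at $x_\sigma$. When $|I| < r$, every $r$-fold wedge of generators of $\mF_V$ contains at least one factor $\delta_w$ with $w \notin \{v_{\rho_k}\}_{k \in I}$ that vanishes at $x_\sigma$, so the image of $\det \mF_V$ in $\wedge^r T_X$ vanishes at $x_\sigma$; dually, the image of $\eta$ lies in $\mathfrak{m}_{x_\sigma}\cdot \mO(K_\mF)$ and $\eta$ fails to be surjective.

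The main obstacle will be the $\mQ$-factorial (non-smooth) setting, where the dual vector $m_{\rho_k}$ may lie in $M_\mQ \setminus M$, so $\chi^{-m_{\rho_k}}\delta_{v_{\rho_k}}$ is not literally a coordinate vector field on $U_\sigma$, and $\mO(K_\mF)$ need not be a line bundle at $x_\sigma$. I plan to bypass this either by pulling back along a finite equivariant toric cover $\widetilde{U}_{\widetilde\sigma} \to U_\sigma$ from a smooth refinement, where the argument reduces to the smooth dual-basis calculation, and descending using that both $\text{Sing}$ and the Pfaff field commute with such pullback; or, more intrinsically, by verifying the surjectivity weight-by-weight on the Klyachko multi-filtrations of $\Omega^r_X$ and $\mO(K_\mF)$, showing that for every weight $m'$ generating $\mO(K_\mF)$ near $x_\sigma$, a weight-$m'$ section of $\Omega^r_X$ maps onto it under $\eta$ precisely when $|I| = r$.
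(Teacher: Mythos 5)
Your diagnosis of where the real work lies is correct and goes beyond what the paper records: the paper's own proof is the single sentence ``the result follows from the previous theorem,'' but Theorem \ref{214} only describes $\text{Sing}(\mF_V)$ on the ray charts $U_\rho$, where the matrices $A_{U_\rho}$ have constant entries and always have rank $r$ (the rows are the images of a basis of $V$ under the injective map $v\mapsto(\langle m_\rho,v\rangle,\langle m_2,v\rangle,\dots,\langle m_n,v\rangle)$), so that theorem by itself only shows $\text{Sing}(\mF_V)\cap U_\rho=\emptyset$ unconditionally; the entire content of the corollary lives in the codimension $\geq 2$ orbits that Theorem \ref{214} does not see. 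Your reduction to the distinguished points $x_\sigma$ of maximal cones (a nonempty closed invariant set contains a closed orbit), and your identification of the generators of $\mF_V|_{U_\sigma}$ as the fields $\delta_w$, which vanish at $x_\sigma$, together with $\chi^{-m_{\rho_k}}\delta_{v_{\rho_k}}$ for $\rho_k\subset V$, which do not, correctly establish the equivalence whenever $\sigma$ is a smooth cone. Up to that point your argument is complete and more careful than the paper's.

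The gap is exactly where you flagged it, and neither of your proposed repairs closes it. The finite toric cover $\widetilde U_{\widetilde\sigma}\to U_\sigma$ is the quotient by $G=N/N'$, and the Pfaff field does \emph{not} commute with this quotient: when $m_{\rho_k}\in M_\mQ\setminus M$ the nonvanishing generator $\partial/\partial\chi^{m_{\rho_k}}$ upstairs is not $G$-invariant, hence does not descend, and every $G$-invariant section of $\mF_V$ then vanishes at $x_\sigma$. Concretely, take $\sigma=\text{Cone}((1,0),(1,2))$ in $N=\mZ^2$ and $V=\mR\cdot(1,0)$, so that $U_\sigma=\Spec\mC[a,b,c]/(b^2-ac)$ with $a=\chi^{(0,1)}$, $b=\chi^{(1,0)}$, $c=\chi^{(2,-1)}$. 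Here $\Gamma(U_\sigma,\mF_V)$ is generated by $\delta_{(1,0)}=b\,\partial_b+2c\,\partial_c$ and $\chi^{(-1,1)}\delta_{(1,0)}=a\,\partial_b+2b\,\partial_c$, and the image of $\Omega^1_{U_\sigma}\otimes\mO(-K_{\mF_V})\to\mO_{U_\sigma}$ is the maximal ideal $(a,b,c)$: the singular locus is the torus-fixed point even though $V$ is spanned by a ray of $\sigma$. So in the genuinely simplicial case the ``if'' direction cannot be recovered by your cover-and-descend step (nor, as far as I can see, by the weight-by-weight variant, which runs into the same non-integral weight $m_{\rho_k}$); with the paper's Definition 1.3 of the singular locus the statement itself is problematic there, and a correct proof has to either assume the cone spanned by the rays generating $V$ is smooth or work with a notion of $\text{Sing}(\mF)$ that discounts $\text{Sing}(X)$.
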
    
\begin{proof}
	The result follows from the previous theorem.
\end{proof}

\section{Singularities}

It is shown in \cite[Proposition 14.3.1]{Mat13} that the criteria for a toric variety to have terminal or canonical singularities can be described in terms of the geometry of convex cones. In this section, we extend this result to singularities of toric foliations. 

First, we recall the definition of foliation singularities. Let $(X, \mF)$ be a foliated variety such that $K_\mF$ is $\mQ$-cartier. Given a projective birational morphism $\phi: X' \to X$, let $\mF'$ be the pulled-back foliation of $\mF$ on $X'$. We can write 
$$K_{\mF'} = \phi^* K_\mF + \sum a(E,X,\mF) E,$$
where $E$ runs over all exceptional divisors for $\phi$.

\begin{definition}
	The foliated variety $(X,\mF)$ is terminal (canonical) if for all $E$ exceptional over $X$, $a(E,X,\mF) > 0$ ($\geq 0$).
\end{definition}

Let $X$ be a $\mQ$-factorial toric variety. By \cite[Section 3.2]{CLS11}, for each cone $\sigma \in \Sigma$, there exists $m_\sigma \in M_\mQ$ such that $m_\sigma(v_i)=1$ for all primitive vectors $v_i$ of rays $\rho_i \in \sigma(1)$. Since $X$ is $\mQ$-factorial, $v_i$ are linearly independent and any vector $v_j$ contained in $\sigma$ can be written uniquely as $\sum_{\rho_i \in \sigma(1)} a_i v_i$ for $a_i \in \mQ$. Hence $$m_\sigma(v_j)=\sum_{\rho_i \in \sigma(1)} a_i m_\sigma(v_i).$$

There also exist $m'_\sigma, m_{\sigma, V} \in M_\mQ$ such that for primitive vectors $v_i$ of rays $\rho_i \in \sigma(1)$, we have
$$	m'_\sigma(v_i) = \left\{
\begin{array}{ll}
	0  &  \text{ if } v_i \in V, \\
	1 & \text{ if }  v_i \not\in V.
\end{array} \right. $$
$$m_{\sigma, V} (v_i) = \left\{
\begin{array}{ll}
	1  &  \text{ if } v_i \in  V, \\
	0 & \text{ if } v_i \not\in  V.
\end{array} \right.$$
Similarly, $m'_\sigma, m_{\sigma, V}$ are also well-defined for any vector $v_j$ contained in $\sigma$. Notice that $m_\sigma = m'_\sigma + m_{\sigma, V}$.

Now we obtain the \textbf{\hyperref[12]{Theorem \ref*{12}}}.

\begin{proof}
	Take a resolution of singularities $\phi: X' \to X$ by choosing a subdivision $\Sigma'$ of the fan $\Sigma$.
	
	Since  $\mO(K_X) = \mO(K_\mF) \otimes \det(\cN_\mF^*)$, we can rewrite $K_{\mF'} = \phi^* K_\mF + \sum a(E,X,\mF) E$ as
	$$K_{X'}- c_1(\cN_{\mF'}^*) = \phi^* (K_X - c_1(\cN_\mF^*)) + \sum a(E,X,\mF) E.$$
	
	Take a primitive vector $v_j \in N$ with $\langle v_j \rangle \in \Sigma'(1)$. Notice that the divisor $D_{\langle v_j \rangle}$ is exceptional with respect to $\phi$ if and only if $\langle v_j \rangle \not\in \Sigma(1)$.
	
	Suppose $v_j$ is contained in the cone $\sigma \in \Sigma$. Then
	$$\text{ord}_{D_{\langle v_j \rangle}} \{ {K_{X'} - \phi^* K_X} \} = -1 + m_\sigma(v_j).$$
	
	Since  $\mO(K_X) = \mO(K_\mF) \otimes \det(\cN_\mF^*)$, by \textbf{\hyperref[11]{Theorem \ref*{11}}}, $c_1(\cN_{\mF}^*) = -\sum_{\rho \not\subset V} D_\rho $. Then 
	
	$$
	\text{ord}_{D_{\langle v_j \rangle} } \{ c_1(\cN_{\mF'}^*) - \phi^* c_1(\cN_{\mF'}^*)  \}  = \left\{
	\begin{array}{ll}
		m'_\sigma (v_j)   &   \text{ if } v_j \in  V, \\
		- 1 + m'_\sigma (v_j) &   \text{ if } v_j \not\in V.
	\end{array} \right. 
	$$
	Hence the discrepancy at $D_{\langle v_j \rangle}$ is given by 
	$$
	a(D_{\langle v_j \rangle}, X, \mF) = \left\{
	\begin{array}{lll}
		-1 + m_\sigma(v_j) - m'_\sigma (v_j) & = -1 + m_{\sigma, V} (v_j)   &   \text{ if }  v_j \in V, \\
		m_\sigma(v_j) - m'_\sigma (v_j) & =  m_{\sigma, V} (v_j)  &   \text{ if }  v_j \not\in V.
	\end{array} \right.
	$$
	
	Therefore, $(X,\mF)$ is terminal at $U_\sigma$ if and only if for every vector $v_j$ contained in $\sigma$ such that $\langle v_j \rangle \notin \sigma(1)$, we have 
	$$
	\left\{
	\begin{array}{lll}
		-1 + m_{\sigma, V} (v_j) > 0   &   \text{ if } v_j \in V, \\
		m_{\sigma, V} (v_j) > 0 &   \text{ if } v_j  \not\in V.
	\end{array} \right. \\
	$$
	
	Denote by $V_\mR$ the real vector space generated by real vectors of $V$. 
	Let $S_1^\sigma =	\{m_{\sigma, V} \leq 1 \} \cap N \cap V_\mR  \cap \sigma $ and $S_2^\sigma = \{m_{\sigma, V} \leq 0 \} \cap N \cap \{ N_\mR \setminus V_\mR\} \cap \sigma$. Then $(X,\mF)$ is terminal at $U_\sigma$ if and only if
	$$S_1^\sigma \cup S_2^\sigma = \{ 0 \} \cup \{v_i : \text{primitive vectors of rays in } \sigma \}.$$
	Let $T_1^\sigma =	\{m_{\sigma, V} < 1 \} \cap N \cap V_\mR  \cap \sigma $ and $T_2^\sigma = \{m_{\sigma, V} < 0 \} \cap N \cap \{ N_\mR \setminus V_\mR \} \cap \sigma$. Then $(X,\mF)$ is canonical at $U_\sigma$ if and only if
	$$T_1^\sigma = \{0\}, T_2^\sigma = \emptyset.$$
	Notice that $m_{\sigma, V}$ is non-negative on $\sigma$. Hence $T_2^\sigma = \emptyset$ is always true.
\end{proof}

\section{Foliated MMP}

In this section, we extend Spicer's results \cite{Spi20} on co-rank $1$ toric foliations to those of any rank.

\begin{proposition}
	Let $\mF$ be a toric foliation on a $\mathbb{Q}$-factorial toric variety $X$, which is determined by a subspace $V$ of $N_\mC$. If $\rho \subset V$ for some $\rho \in \Sigma(1)$, then $\mF$ is pulled back along some dominant rational map $f: X \dasharrow Y$ with $\dim(Y) < \dim(X)$. In particular, if $K_\mF$ is not trivial, then $\mF$ is a pull-back.
\end{proposition}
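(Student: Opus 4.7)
The plan is to build, from the assumption $\rho \subset V$, a dominant toric rational map $f : X \dashrightarrow Y$ of relative dimension one together with a toric foliation on $Y$ whose pullback recovers $\mF$. Since $v_\rho$ is primitive in $N$, the quotient $N' := N/\mZ v_\rho$ is a torsion-free lattice of rank $n-1$, and the projection $\pi : N \to N'$ induces a linear surjection $\pi_\mR : N_\mR \to N'_\mR$ with kernel $\mR v_\rho$. I would then pick any complete simplicial fan $\Sigma'$ in $N'_\mR$, so that the associated $\mQ$-factorial toric variety $Y$ has dimension $n-1$. The lattice map $\pi$ yields a dominant toric rational map $f : X \dashrightarrow Y$, which restricted to the open tori is just the quotient $T_N \twoheadrightarrow T_{N'}$.

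Set $V_0 := \pi_\mR(V) \subset N'_\mR$, which has dimension $\dim V - 1$, and consider the associated toric foliation $\mF_{V_0}$ on $Y$. The core claim is that $\mF_V = f^*\mF_{V_0}$. Both sides are equivariant saturated subsheaves of $T_X$, so it is enough to check equality on the open torus orbit $T_N \subset X$, where each is a constant subspace of $N_\mR \cong T_e T_N$. On $T_N$, the left hand side has fibre $V$, while $f^*\mF_{V_0}$ has fibre $\pi_\mR^{-1}(V_0) = V + \ker \pi_\mR = V + \mR v_\rho = V$, where the last equality uses $v_\rho \in V$. Agreement on $T_N$ promotes to a global agreement of torus equivariant saturated subsheaves of $T_X$ via the classification by subspaces of $N_\mR$ established earlier in the paper.

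For the in particular statement I would argue by contraposition using Theorem \ref{11}: if no $\rho \in \Sigma(1)$ lies in $V$, then $K_{\mF_V} = -\sum_{\rho \subset V} D_\rho = 0$, which is nef; hence $K_\mF$ not being nef forces some $\rho \in \Sigma(1)$ with $\rho \subset V$, so the construction above applies.

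The main technical point to be careful about is the definition of the pullback foliation along a \emph{rational}, rather than regular, map, and the verification that the saturation of the pulled back tangent subsheaf is genuinely $\mF_V$ and not a proper subsheaf. This is precisely the step where the multi-filtration description of equivariant reflexive subsheaves of $T_X$ is used to upgrade the straightforward linear-algebra identity on the open torus to a statement about the entire equivariant sheaf.
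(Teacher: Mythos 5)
Your proposal is correct and follows essentially the same route as the paper: quotient $N_\mR$ by a ray direction contained in $V$ to obtain a dominant toric rational map to a lower-dimensional toric variety, then conclude the ``in particular'' part by noting via Theorem \ref{11} that $K_\mF=0$ when no ray lies in $V$. The only differences are cosmetic --- the paper quotients by the span of \emph{all} rays contained in $V$ and takes $\Sigma'$ to be the projected fan, whereas you quotient by the single ray $\rho$ and take an arbitrary complete fan --- and your explicit verification on the open torus that $f^*\mF_{V_0}=\mF_V$ actually supplies a detail the paper leaves implicit.
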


\begin{proof}
	Let $V'$ be the subspace of $V$ generated by all the rays $\rho_i \subset V$. Consider the vector space given by $N'_\mR := N_\mR/ V'$. Then we can construct a new fan $\Sigma'$ determined by the projection of the fan $\Sigma$ onto the subspace $N'_\mR$. By proper choice of the new lattice $N'$ and adding necessary rays, we can assure that for every ray $\rho \in \Sigma(1)$, there exists a ray $\rho' \in \Sigma'(1)$ such that the projection of $\rho$ onto $N'$ is a multiple of $\rho'$. Denote the toric variety corresponding to $\Sigma'$ by $Y$. Then we get a toric map $f: X \dasharrow Y$.
	
	Notice that this map is only a toric morphism in codimension one. Indeed, it is possible that for some cone $\sigma \in \Sigma$, the projection of  $\sigma$ fails to fall into one cone in $\Sigma'$. Hence the projection mapping between two lattices is not compatible with the fans $\Sigma$ and $\Sigma'$ as defined in \cite[Section 3.3]{CLS11}. Hence $f$ is not a morphism in general. 
	
	Now suppose by contradiction that $\mF$ is not a pull-back. Then there exists no ray $\rho_i$ such that $\rho_i \subset V$. Hence $K_\mF$ is trivial by \textbf{\hyperref[11]{Theorem \ref*{11}}}.
\end{proof}

Before proving the next lemma, we need to generalize \textbf{\hyperref[203]{Definition \ref*{203}}} when the subvariety $W$ is contained in the singular locus.

\begin{definition}
	Let $\mF$ be a toric foliation on a $\mQ$-factorial toric variety $X$. We say that a curve $W \subset X$ is tangent to $\mF$ if, for any toric smooth resolution $\pi: X' \to X$, there exists a curve $W'$ on $X'$ that dominates $W$ and is tangent to $\mF'$ in the sense of \textbf{\hyperref[203]{Definition \ref*{203}}}, where $\mF'$ is the pulled back foliation on $X'$.
\end{definition}

\begin{lemma} \label{32}
	Let $\omega$ be a codimension $1$ cone in a simplicial fan $\Sigma$. Denote by $V_\omega$ the vector space generated by the rays of $\omega$. Then the torus invariant curve $D_\omega$ is not tangent to the foliation $\mF_V$ if and only if $V \subset V_\omega$.
\end{lemma}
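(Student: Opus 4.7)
The plan is to reduce to a computation in a single affine chart containing the generic point of $D_\omega$. Fix a maximal cone $\sigma \in \Sigma$ with $\omega$ as a facet; since $X$ is $\mQ$-factorial, $\sigma$ is simplicial, so its primitive rays can be labeled $v_1,\dots,v_{n-1}$ (the rays of $\omega$) together with one extra ray $v_n$. Let $m_1,\dots,m_n \in M_\mQ$ be the dual basis defined by $\langle m_i,v_j\rangle=\delta_{ij}$ (passing to a finite cover if needed to make these integral characters). Then $V_\omega = \mathrm{span}(v_1,\dots,v_{n-1}) = m_n^{\perp} \subset N_\mR$, so $V\subset V_\omega$ is equivalent to $\langle m_n,v\rangle = 0$ for every $v\in V$. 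In $U_\sigma$ the curve $D_\omega$ is the coordinate axis $\{\chi^{m_1}=\cdots=\chi^{m_{n-1}}=0\}$, and its tangent at the generic point is spanned by $\chi^{m_n}\tfrac{\partial}{\partial\chi^{m_n}} = \delta_{v_n}|_{D_\omega}$.

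Next, by \textbf{\hyperref[211]{Proposition \ref*{211}}}, a set of local generators for $\mF_V|_{U_\sigma}$ consists of $\delta_v$ for $v$ in a chosen basis of $V$, except that for each ray $\rho_i\in\sigma(1)$ with $\rho_i\subset V$ the corresponding generator is $\chi^{-m_i}\delta_{v_i} = \tfrac{\partial}{\partial\chi^{m_i}}$. The key restriction formula is
\[
\delta_v\big|_{D_\omega} = \langle m_n,v\rangle\, \chi^{m_n}\tfrac{\partial}{\partial\chi^{m_n}},
\]
so $\delta_v$ contributes a tangent direction to $D_\omega$ iff $\langle m_n,v\rangle\neq 0$. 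The ``singular'' generators $\tfrac{\partial}{\partial\chi^{m_i}}$ with $i\le n-1$ are conormal to $D_\omega$ because $\chi^{m_i}$ defines $D_\omega$, so they project to zero in $T_{D_\omega}$.

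Assembling the two cases: if $V\not\subset V_\omega$, choose $v\in V$ with $\langle m_n,v\rangle\neq 0$; then $\delta_v$ restricts to a nonzero multiple of the tangent of $D_\omega$, giving $T_{D_\omega}\subset\mF_V|_{D_\omega}$, so $D_\omega$ is tangent. Conversely, if $V\subset V_\omega$, every $\delta_v$ with $v\in V$ vanishes on $T_{D_\omega}$ and every singular generator is conormal, so the image of $\mF_V|_{D_\omega}\to T_{D_\omega}$ is zero and $D_\omega$ is not tangent. The main subtlety is handling the singular generators $\chi^{-m_i}\delta_{v_i}$ coming from rays $\rho_i\subset V\cap\omega(1)$ in the $V\subset V_\omega$ case: these could a priori introduce an unexpected tangent direction, but the explicit expansion to $\tfrac{\partial}{\partial\chi^{m_i}}$ makes it clear that they are purely conormal to $D_\omega$ and thus cannot do so.
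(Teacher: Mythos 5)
Your proof is correct and follows essentially the same route as the paper's (very terse) argument: work in the chart of a maximal cone $\sigma=\langle\omega,\rho\rangle$, use the explicit generators from \textbf{\hyperref[211]{Proposition \ref*{211}}}, and test whether the tangent direction $\chi^{m_n}\tfrac{\partial}{\partial\chi^{m_n}}$ of $D_\omega$ lies in the image of $\mF_V$. Yours is in fact more complete than the paper's one-line proof, since it verifies both implications and explicitly disposes of the generators $\tfrac{\partial}{\partial\chi^{m_i}}$ coming from rays $\rho_i\subset V$ (which are normal, not ``conormal,'' to $D_\omega$ --- a terminology slip only).
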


\begin{proof}
	Consider the toric smooth resolution given by a sequence of subdivisions of the fan $\Sigma$. The cone $\omega$ is divided into multiple smooth codimension $1$ cones $\omega'_i$, whose corresponding curves $D_{\omega'_i}$ all dominate $D_\omega$. Since $\Sigma$ is simplicial, the vector space generated by the rays of $\omega'_i$ will be the same as $V_\omega$. Hence we can assume $\omega$ is contained in a maximal smooth cone $\sigma = \langle \omega , \rho \rangle$ for some ray $\rho$. By the change of coordinates, we can further assume $\omega= \langle e_1, \cdots, e_{n-1} \rangle$ and $\rho= \langle e_n \rangle$.
	
	Hence $D_\omega$ is not tangent to the foliation if and only if $\frac{\partial}{\partial{x_n}} \not\subset \mF_V|_{D_\omega}$. By \textbf{\hyperref[211]{Proposition \ref*{211}}}, this happens if and only if $\langle e_n, v \rangle = 0$ for any $v \in V$, which means  $V \subset V_\omega$.
\end{proof}

We recall here the byproduct of \textbf{\hyperref[11]{Theorem \ref*{11}}}.
\begin{lemma} \label{34}
	Let $\mF$ be a toric foliation on a $\mathbb{Q}$-factorial toric variety $X$, which is determined by a subspace $V$ of $N_\mC$. Let $D_\rho$ be the toric divisor corresponding to the ray $\rho$. Then $D_\rho$ is $\mF$-invariant if and only if $\rho \not\subset V$.
\end{lemma}

Now we are ready to prove the cone theorem for toric foliations.
\begin{proposition} \label{33}
	Let $\mF$ be a toric foliation on a $\mQ$-factorial toric variety $X$. Let $C$ be a curve in $X$ such that $K_\mF \cdot C <0$. Then $[C]=[M]+\alpha$ where $M$ is a torus invariant curve tangent to the foliation and $\alpha$ is a pseudo-effective class.
\end{proposition}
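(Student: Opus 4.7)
The plan is to decompose $[C]$ as a non-negative combination of torus invariant curve classes, then use Lemma 3.2 and the wall relations to swap any non-tangent component for a tangent one.

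First, since $\overline{NE}(X)$ is generated by classes of torus invariant curves for a $\mathbb{Q}$-factorial toric variety (an extension of Reid's theorem, see \cite{CLS11}), degenerating $C$ under the torus action yields an effective $1$-cycle $\sum_i b_i M_i$ numerically equivalent to $C$, with $b_i \ge 0$ and each $M_i = D_{\omega_i}$ a torus invariant curve. By Theorem 0.1 we have $K_\mF = -\sum_{\rho \subset V} D_\rho$, so $K_\mF \cdot C < 0$ forces some index $i_0$ with $b_{i_0} > 0$ and $K_\mF \cdot M_{i_0} < 0$.

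If $M_{i_0}$ is already tangent, we take $M = M_{i_0}$; the residue $\alpha = [C] - [M]$ is then a non-negative combination of the remaining torus invariant curve classes, hence pseudo-effective. Otherwise, Lemma 3.2 yields $V \subset V_{\omega_{i_0}}$; moreover since $D_\rho \cdot M_{i_0} = 0$ for rays $\rho$ outside the two maximal cones adjacent to $\omega_{i_0}$, the inequality $K_\mF \cdot M_{i_0} < 0$ forces a ray $\rho_\ast \subset V$ that is a ray of $\omega_{i_0}$ and has positive coefficient in the wall relation of $\omega_{i_0}$. Replacing $\rho_\ast$ in $\omega_{i_0}$ by the opposite ray $\rho^+$ of an adjacent maximal cone $\sigma^+ = \langle \omega_{i_0}, \rho^+ \rangle$ produces a new wall $\omega'$, and the simplicial property of $\sigma^+$ gives $v_{\rho_\ast} \notin V_{\omega'}$; combined with $v_{\rho_\ast} \in V$, Lemma 3.2 shows $D_{\omega'}$ is tangent.

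Setting $M = D_{\omega'}$, it remains to verify $\alpha := [C] - [M] \in \overline{NE}(X)$. This is the main obstacle: the verification reduces to comparing the wall relations for $\omega_{i_0}$ and $\omega'$ inside the common codimension-two toric surface $D_\tau$ with $\tau = \omega_{i_0} \cap \omega'$, and combining the resulting numerical relation between $[M_{i_0}]$ and $[D_{\omega'}]$ with the non-negativity of the other $b_j$'s. The delicate combinatorial bookkeeping -- ensuring that the swap of a non-tangent component for a tangent one preserves pseudo-effectivity of the residue -- is what makes this step non-trivial.
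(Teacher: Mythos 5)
Your setup coincides with the paper's: decompose $[C]$ into an effective combination of torus-invariant curves $D_\omega$, isolate a wall $\omega_{i_0}$ with positive coefficient and $K_\mF\cdot D_{\omega_{i_0}}<0$, and, when $D_{\omega_{i_0}}$ is not tangent, use Lemma \ref{32} together with simpliciality to produce a ray $v_i\subset V$ of $\omega_{i_0}$ whose coefficient in the wall relation is positive and to check that a wall obtained by dropping $v_i$ is tangent. The problem is that the one step you defer --- showing the residue is still pseudo-effective after the swap --- is the entire content of the proposition, and you do not carry it out; as written this is a genuine gap, not bookkeeping to be postponed.

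The paper closes exactly this step with Reid's wall-crossing relation \cite{Rei83} (see also \cite{Mat13}): because the coefficient $a_i$ of $v_i$ in the wall relation of $\omega_{i_0}$ is positive, the cone $\tau_i=\langle v_1,\dots,\hat{v_i},\dots,v_{n-1},v_n,v_{n+1}\rangle$ lies in $\Sigma$ and $\tau_n\cup\tau_{n+1}$ is strictly concave along $\iota_i=\langle v_1,\dots,\hat{v_i},\dots,v_{n-1}\rangle$, which yields the numerical relation
$$[D_{\omega_{i_0}}]\in\mR_{>0}\,[D_{\omega_{i,n}}]+\mR_{>0}\,[D_{\omega_{i,n+1}}],$$
where $\omega_{i,n}$ and $\omega_{i,n+1}$ are the two new walls through $\iota_i$. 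Both are tangent by precisely the argument you gave for your single $\omega'$: each omits $v_i$, so $V\not\subset V_{\omega_{i,j}}$. Substituting this relation for the term $b_{i_0}[D_{\omega_{i_0}}]$ in the effective decomposition of $[C]$ exhibits $[C]$ as a non-negative combination of torus-invariant curves in which a tangent curve occurs with strictly positive coefficient, which is the assertion. Two further points: the relation naturally involves \emph{both} adjacent walls, not only the one $\omega'$ you selected, so aiming to prove that $[C]-[D_{\omega'}]$ is pseudo-effective for a single swapped wall is the wrong target; and no comparison of wall relations inside the codimension-two stratum is needed --- the strict concavity along $\iota_i$ already \emph{is} the required positivity statement.
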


\begin{proof}
	By \cite[Chapter 14]{Mat13}, we have
	$$ [C]=\sum_{\text{tangent to }\mF} a_\zeta [D_\zeta] +  \sum_{\text{not tangent to }\mF} b_\omega [D_\omega]  $$
	where $D_\zeta$ and $D_\omega$ are torus invariant curves corresponding to codimension $1$ cones $\zeta$ and $\omega$.
	
	Next, we show that some $a_\zeta$ can be chosen as non-zero. Assume the contrary that $a_\zeta=0$ for all $\zeta$. 
	
	Since $D_\omega$ is not tangent to the foliation, by the previous lemma, we have $V \subset V_\omega$ where $V_\omega$ is the vector space generated by the rays of $\omega$. Suppose $\omega = \langle v_1, \cdots, v_{n-1} \rangle$, then at least one divisor $D_{v_i}$ is not foliation invariant.
	
	Since $K_\mF \cdot C <0$, we must have $D_\omega \cdot D_{\langle v_i \rangle} >0$ for some $\omega$, $v_i$. Let $v_n$, $v_{n+1}$ be two primitive vectors such that together with $\omega$ they generate two maximal cones $\tau_{n+1}$, $\tau_n$ of the fan $\Sigma$. Hence by \cite{Rei83} and \cite[Proposition 14.1.5]{Mat13}, we have 
	$\tau_i = \langle v_1, \cdots, \hat{v_i}, \cdots, v_{n-1}, v_n, v_{n+1} \rangle \in \Sigma$ and $\tau_n \cup \tau_{n+1}$ is strictly concave along $\iota_i= \langle v_1, \cdots, \hat{v_i}, \cdots, v_{n-1} \rangle $ as illustrated in the following figure.
	
	$$	\begin{tikzpicture}
		\draw[thin,-] (-3,0) -- (0,0) ;
		\draw[] (-3,0) node[anchor=east] { $v_i$ };
		\draw[thin,-] (0,0) -- (2,2);
		\draw[] (2,2) node[anchor=south west] { $v_{n+1}$ };
		\draw[thin,-] (-3,0) -- (2,2);
		\draw[thin,-] (0,0) -- (2,-2);
		\draw[] (2,-2) node[anchor=north west] { $v_{n}$ };
		\draw[thin,-] (-3,0) -- (2,-2);
		\draw[thin,-] (2,2) -- (2,-2);
		\draw[] (-0.1,0.15) node[anchor=south east] { $\tau_n$ };
		\draw[] (0,-0.15) node[anchor=north east] { $\tau_{n+1}$ };
		\draw[] (-1.5,0) node[anchor=south] { $\omega$ };
		\draw[] (1.2,-0.2) node[anchor=south] { $\tau_i$ };
		\draw[] (1.3,-1) node[anchor=south] { $\omega_{i,n+1}$ };
		\draw[] (1.3,0.5) node[anchor=south] { $\omega_{i,n}$ };
		\draw[] (0.1,0) node[anchor=west] { $\iota_i$ };
	\end{tikzpicture}
	$$
	
	Suppose for example $D_{\omega_{i,n}}$ is not tangent to the foliation, then $V \subset V_{\omega_{i,n}}$ where $V_{\omega_{i,n} }$ is the vector space generated by rays in $\omega_{i,n}$. Hence $V \subset V_\omega \cap V_{\omega_{i,n}} = V_{\iota_i}$. But this implies ${\langle v_i \rangle} \not\subset V$, which means $D_{\langle v_i \rangle}$ is foliation invariant. Hence $D_{\omega_{i,n}}$ and $D_{\omega_{i,n+1}}$ are torus invariant curves tangent to the foliation. By \cite[Proposition 14.1.5]{Mat13}, $D_{\omega_{i,n}}$ and $D_{\omega_{i,n+1}}$ both belong to the same extremal ray as $D_\omega$.
\end{proof}

Now we obtain the \textbf{\hyperref[13]{Theorem \ref*{13}}}.

\begin{definition}
	Let $\mF$ be a toric foliation. It is said to have dicritical singularities if there exists a divisor $E$ sitting over the singular locus of $\mF$ which is not foliation invariant. Otherwise, it is called non-dicritical.
\end{definition}

By \textbf{\hyperref[214]{Theorem \ref*{214}}}, the singular locus of a toric foliation is always torus invariant. Hence every component is of the form $D_{\langle v_1, \cdots , v_m \rangle}$ for some cone $\tau =  \langle v_1, \cdots , v_m \rangle$. 

\begin{proposition} \label{37}
	Let $\mF$ be a toric foliation on a $\mathbb{Q}$-factorial toric variety $X$, which is determined by a subspace $V$ of $N_\mC$. Then $\mF$ is dicritical if there exists an singular component $D_{\langle v_1, \cdots , v_m \rangle}$ of the foliation such that $\text{dim} (\tau   \cap V ) \geq 1$, where $\tau =  \langle v_1, \cdots , v_m \rangle$, and $v_i \not\in V$ for $1 \leq i \leq m$.
\end{proposition}
\begin{proof}
	Every toric resolution of $D_{\langle v_1, \cdots , v_m \rangle}$ is given by successively adding new rays in the cone $\tau$ of the fan $\Sigma_X$, which is called star subdivision of $\Sigma$ along $\tau$ \cite{CLS11}. Hence a divisor sitting over $D_{\langle v_1, \cdots , v_m \rangle}$ is represented by a new ray in the cone $\tau$. This divisor is not foliation invariant if this ray is contained in $V$ by \textbf{\hyperref[34]{Lemma \ref*{34}}}, which means $\text{dim} (\{\tau \setminus \tau(1)\}  \cap V ) \geq 1$.
	
	Suppose $v_i \in V$, then by \textbf{\hyperref[214]{Theorem \ref*{214}}}, $D_{\langle v_i \rangle} \cap \text{Sing}(\mF) = \emptyset $. But $D_{\langle v_1, \cdots , v_m \rangle} \subset D_{\langle v_i \rangle}$. Hence $v_i \not\in V$ and $\text{dim} (\tau   \cap V ) \geq 1$.
\end{proof}

\begin{theorem} \label{38}
	Let $\mF$ be a toric foliation on a $\mathbb{Q}$-factorial toric variety $X$. If $\mF$ is canonical, then $\mF$ is non-dicritical.
\end{theorem}
\begin{proof}
	Suppose $\mF$ is dicritical. Then by the previous proposition, there exists a singular component $D_{\langle v_1, \cdots , v_m \rangle}$ of the foliation such that $\text{dim} (\tau   \cap V ) \geq 1$, where $\tau =  \langle v_1, \cdots , v_m \rangle$. Also, we have $v_i \not\in V$.
	
	Consider a vector $v \in \tau  \cap V$. Then $v= \sum_{\rho_i \in \tau(1)} a_i v_i$ and 
	$$m_{\sigma, V}(v)=\sum_{\rho_i \in \tau(1)} a_i m_{\sigma, V}(v_i) = 0$$ 
	as $v_i \not\in V$, where $\sigma$ is a maximal cone containing $\tau$. Hence $v \in T_1^\sigma$. By \textbf{\hyperref[12]{Theorem \ref*{12}}}, $\mF$ is not canonical.
\end{proof}

\begin{lemma} \label{39}
	Let $\mF$ be a toric foliation with canonical singularities on a $\mQ$-factorial toric variety $X$. Let $R$ be a $K_\mF$-negative extremal ray. Then there is a contraction corresponding to $R$ that belongs to one of the following types:
	\begin{enumerate}
		\item fibre type contractions,
		\item divisorial contractions,
		\item small contractions.
	\end{enumerate}
	By the previous theorem, $\mF$ has non-dicritical singularities. After a contraction of type ($1$) or ($2$), the resulting foliation will still have non-dicritical singularities.
\end{lemma}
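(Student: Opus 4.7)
The plan is to get the existence of the contraction directly from the toric Mori program, and then handle the non-dicritical claim combinatorially by tracing the fan modification. Since $X$ is $\mQ$-factorial toric, the cone $\overline{NE}(X)$ is a finite rational polyhedral cone generated by classes of torus invariant curves, and every extremal ray admits a toric contraction morphism $\phi_R \colon X \to Y$ by the classical theory (cf.\ \cite{Rei83}, \cite{CLS11}, \cite{Mat13}). By \textbf{\hyperref[13]{Theorem \ref*{13}}}, the ray $R$ is generated by the class of a torus invariant curve $M = D_\omega$ tangent to $\mF$ for some codimension one cone $\omega$ of $\Sigma$. Reading the exceptional locus of $\phi_R$ off the combinatorics of the fan gives the trichotomy: either $\dim Y < \dim X$ (fibre type), or $\phi_R$ contracts a divisor (divisorial), or $\phi_R$ is an isomorphism in codimension one (small).

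For the non-dicritical preservation, I first describe the pushforward foliation on $Y$ combinatorially. In the fibre type case, $\phi_R$ corresponds to projecting $N_\mR$ along a subspace $V' \subseteq V$ spanned by certain rays of $\Sigma$ lying in $V$, exactly as in the first Proposition of this section, and $\mF$ is then pulled back from the toric foliation on $Y$ determined by the image of $V$. In the divisorial case, the fan $\Sigma_Y$ is obtained from $\Sigma$ by removing a single ray $\rho$ (whose star contains the contracted divisor) and fusing the adjacent maximal cones, while the pushforward foliation $\mF_Y$ is the toric foliation associated to the same subspace $V$.

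Next, I reformulate non-dicriticality combinatorially: by \textbf{\hyperref[214]{Theorem \ref*{214}}} and \textbf{\hyperref[215]{Corollary \ref*{215}}}, $\mathrm{Sing}(\mF_V) \cap U_\sigma$ is nonempty exactly when $V$ is not generated by the rays of $\sigma$, so $\mF_V$ is non-dicritical iff for every such cone $\sigma$ and every primitive lattice vector $v$ in its relative interior corresponding to an exceptional prime divisor, one has $\langle v \rangle \not\subset V$. Given a hypothetical dicritical exceptional divisor of $\mF_Y$, one lifts its associated ray to a ray in a corresponding subdivision of $\Sigma$ sitting inside a cone whose affine chart meets $\mathrm{Sing}(\mF)$, contradicting non-dicriticality of $\mF$ on $X$.

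The main obstacle I expect is the bookkeeping in the divisorial case, where fusing cones across $\rho$ can produce new cones $\sigma' \in \Sigma_Y$ whose rays no longer generate $V$ even when every cone of $\Sigma$ through $\rho$ did. One must verify that a would-be offending ray in the interior of such a $\sigma'$ with $\langle v \rangle \subset V$, once $\rho$ is reinserted into the fan, still sits inside a cone of $\Sigma$ whose affine chart meets $\mathrm{Sing}(\mF)$, so that the non-dicritical hypothesis on $X$ already forbids it. I anticipate this to be the technical heart of the argument.
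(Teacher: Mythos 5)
Your framework is the right one, and it is essentially the paper's: the trichotomy of contractions comes from classical toric Mori theory (the paper simply defers to \cite[Lemma 10.8]{Spi20}), and the non-dicriticality statement is to be traced through the fan using the combinatorial description of $\mathrm{Sing}(\mF_V)$ from Theorem~\ref{214} and Corollary~\ref{215}. However, there is a genuine gap: the step you yourself flag as ``the technical heart of the argument'' --- verifying that a would-be dicritical ray for the contracted foliation forces dicriticality upstairs --- is precisely the content of the lemma, and the proposal never carries it out. A plan that ends with ``one must verify\dots I anticipate this to be the technical heart'' has not proved anything beyond the (standard) existence of the contraction.

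For comparison, the paper's argument runs the contrapositive (if the contracted foliation is dicritical, so was the original) and splits into two cases according to whether the singular locus of the source foliation is empty. In the empty case, Corollary~\ref{215} forces $V$ to be generated by rays of every maximal cone; combining $K_\mF\cdot C<0$ for the contracted curve with Reid's wall-crossing picture (the figure in Proposition~\ref{33}) and a convexity argument produces a contradiction. In the non-empty case, one examines $\dim(\tau\cap V)$ for the cones $\tau$ whose orbit closures lie in $\mathrm{Sing}(\mF)$: if some $\dim(\tau\cap V)>1$, a star subdivision along $\tau$ already yields a non-invariant exceptional divisor, while if all such intersections are one-dimensional then $\dim V=1$ and a dicritical ray would make the singular locus empty by Corollary~\ref{215}, again a contradiction. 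None of this case analysis appears in your write-up. Two further imprecisions: ``sitting over the singular locus'' requires the center of the exceptional divisor to be contained in $\mathrm{Sing}(\mF)$, not merely that some chart $U_\sigma$ meets $\mathrm{Sing}(\mF)$; and in the fibre-type case you assert that $\mF$ is pulled back from $Y$ but do not explain why non-dicriticality descends along that fibration.
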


\begin{proof} 
	The proof is similar to \cite[Lemma 10.8]{Spi20}. We only need to show the resulting foliation is non-dicritical.
	
	Suppose $\pi: X \to Y$ is a divisorial contraction. By the negativity lemma \cite[Lemma 3.39, Corollary 3.43]{KM98}, the resulting foliation $\mF_Y$ has canonical singularities. Hence $\mF_Y$ is non-dicritical by \textbf{\hyperref[38]{Theorem \ref*{38}}}.
	
	Suppose $\pi: X \to Y$ is a fibre type contraction. By \cite[Proposition 15.4.1 and 15.4.5]{CLS11}, the fan $\Sigma_Y$ in the lattice $N$ is a degenerate generalized fan \cite[Definition 6.22]{CLS11}. Then $\sigma_0 =  \cap_{\sigma \in \Sigma_Y} \sigma$ is the minimal cone in $\Sigma_Y$. Let $\overline{N} = N / (\sigma_0 \cap N)$ with quotient map $\phi: N \to \overline{N}$. Then $\overline{\Sigma}_Y=\{ \overline{\sigma} \mid  \sigma \in \Sigma_Y   \}$ is a usual fan in $\overline{N}$ such that $Y = Y_{\overline{\Sigma}_Y}$ and the foliation $\mF_Y$ is determined by $V_Y = V \cap \overline{N}_C$.
	
	Assume the contrary that $\mF_Y$ is dicritical. By \textbf{\hyperref[37]{Proposition \ref*{37}}}, there exists a singular point $D_{\tau} \in Y$ of the foliation $\mF_Y$ such that $\text{dim} (\tau   \cap V_Y ) \geq 1$, where $\tau =  \langle v_1, \cdots , v_m \rangle$ is a maximal cone in $\overline{N}$.
	
	By \cite[Proposition 15.4.5]{CLS11}, the fiber of $\pi$ at the point $D_{\tau}$ is a fake weighted projective space. Hence there exists a maximal cone $\tau_X = \langle \phi^{-1} (v_1), \cdots , \phi^{-1} (v_m), \cdots \rangle$ in $N_X = N$ that maps onto $\tau$. If the point $D_{\tau_X} \in X$ is not a singular point of the foliation $\mF$, then $V$ can be generated purely by rays in $\tau_X$ by \textbf{\hyperref[215]{Corollary \ref*{215}}}. But this is impossible since $V_Y$ cannot be generated purely by rays in $\tau$ as $D_\tau$ is singular. Hence the singular locus of $\mF$ is non-empty. 
	
	Since $\text{dim} (\tau_X   \cap V ) \geq \text{dim} (\tau   \cap V_Y ) \geq 1$, by \textbf{\hyperref[37]{Proposition \ref*{37}}}, $\mF$ is dicritical, which leads to a contradiction.

\end{proof}

\begin{lemma} \label{310}
	In the case of a small contraction, the flip exists and no infinite sequence of flips exists. If $\mF$ has canonical and hence non-dicritical singularities, then the flipped foliation $\mF^+$ does as well.
\end{lemma}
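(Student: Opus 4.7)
The plan is to construct the flip combinatorially and then verify existence, termination, and preservation of singularities by exploiting the fact that a small toric contraction preserves $\Sigma(1)$, so the same subspace $V \subset N_\mR$ that defines $\mF$ continues to define a toric foliation on $X^+$. Concretely, let $\pi\colon X \to Z$ be the small contraction of the extremal ray $R$, generated by a torus-invariant curve $D_\omega$ with $\omega = \sigma_1 \cap \sigma_2$ for two adjacent maximal simplicial cones $\sigma_1 = \langle \omega, v_n \rangle$ and $\sigma_2 = \langle \omega, v_{n+1} \rangle$. The flipped fan $\Sigma^+$ is obtained by leaving $\Sigma_X$ unchanged outside $\sigma_1 \cup \sigma_2$ and replacing $\{\sigma_1, \sigma_2\}$ by the opposite simplicial subdivision of the convex hull $\sigma_1 \cup \sigma_2$. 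Since $\pi$ is small, $\Sigma^+(1) = \Sigma_X(1)$, so $V$ determines a toric foliation $\mF^+$ on $X^+$ with $K_{\mF^+} = -\sum_{\rho \subset V} D_\rho$ by \textbf{\hyperref[11]{Theorem \ref*{11}}}. The wall-relation intersection computation used in \textbf{\hyperref[33]{Proposition \ref*{33}}} (as in \cite{Rei83} and \cite{Mat13}), applied to the new walls of $\Sigma^+$, gives on every new torus-invariant curve $M^+$ above the contracted locus that each intersection $D_{\langle v_i \rangle} \cdot M^+$ has the opposite sign of $D_{\langle v_i \rangle} \cdot D_\omega$ for $\langle v_i \rangle \subset V$. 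Summing these signed contributions yields $K_{\mF^+} \cdot M^+ > 0$, so $K_{\mF^+}$ is $\pi^+$-ample and the flip exists.

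For termination, I plan to follow Spicer's strategy in the co-rank $1$ case \cite{Spi20}. In the toric setting, flips reduce to operations on simplicial fans, and only finitely many simplicial fans share a fixed ray set $\Sigma(1)$ on $N$. Attaching to each fan along the MMP a combinatorial invariant (for example the sum of normalised volumes of maximal cones, or the difference function of \cite{Rei83}) and checking that it strictly decreases at each flip then forces termination after finitely many steps.

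For preservation of canonicity I use \textbf{\hyperref[12]{Theorem \ref*{12}}}. Every new maximal cone $\sigma'$ of $\Sigma^+$ is simplicial and contained in $\sigma_1 \cup \sigma_2$, and a convexity check identical to the one used in the classical toric MMP shows that $T_1^{\sigma'} = \{0\}$ and $T_2^{\sigma'} = \emptyset$ follow from the same conditions on $\sigma_1$ and $\sigma_2$. For non-dicriticality I adapt the argument of the previous lemma: by \textbf{\hyperref[214]{Theorem \ref*{214}}} the singular locus of $\mF^+$ is torus-invariant, and any toric resolution of it is given by star subdivisions along cones $\tau^+ \in \Sigma^+$. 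Since each such $\tau^+$ lies in $\sigma_1 \cup \sigma_2$, and the non-dicriticality of $\mF$ forces $\dim(\tau \cap V) \leq 1$ for every cone $\tau \in \Sigma_X$ sitting above $\mathrm{Sing}(\mF)$, the same dimension bound holds for $\tau^+$ by convexity. Hence the new ray added by any star subdivision of $\tau^+$ is not contained in $V$, so the corresponding exceptional divisor is $\mF^+$-invariant.

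The main obstacle will be the sign analysis in the first paragraph: Reid's classical wall computation flips the sign of $K_X \cdot M$, but in the foliated case only the divisors $D_\rho$ with $\rho \subset V$ contribute to $K_{\mF^+}$, so one has to verify that among the signs that flip in the wall relation $\alpha_1 v_1 + \cdots + \alpha_{n+1} v_{n+1} = 0$ the rays contained in $V$ produce a strictly positive contribution to $K_{\mF^+} \cdot M^+$. Here one uses that $D_\omega$ is tangent to $\mF$ by \textbf{\hyperref[32]{Lemma \ref*{32}}} (since $V \not\subset V_\omega$) to guarantee that enough rays in $V$ sit on the negative side of the wall so that the sign flip goes through for $K_{\mF^+}$ as well.
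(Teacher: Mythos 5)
Your outline for constructing the flip, for termination, and for preservation of canonicity follows the standard toric route that the paper itself delegates to \cite{Spi20} and \cite{Mat13}, and that part is unobjectionable. The genuine gap is in the preservation of non-dicriticality, which is the only part the paper proves in detail. You propagate the bound $\dim(\tau\cap V)\le 1$ from cones of $\Sigma_X$ lying over $\mathrm{Sing}(\mF)$ to the new cones of $\Sigma^{+}$ ``by convexity'', but no such transfer exists: the new maximal cones are not contained in any single old cone, and a new cone over $\mathrm{Sing}(\mF^{+})$ need not meet $V$ in the same dimension as the old cones over $\mathrm{Sing}(\mF)$ did. Worse, your argument for this step never uses $K_\mF\cdot D_\omega<0$, and the claim is false without it: in the paper's Example~3.7 the inverse map $X^{+}\dasharrow X$ is the toric flip of a $K_{\mF^{+}}$-positive ray which takes the non-dicritical (indeed nonsingular) foliation $\mF^{+}$ to the dicritical foliation $\mF$; there every cone of $\Sigma_{X^{+}}$ over the (empty) singular locus vacuously satisfies your dimension bound, yet $\dim\bigl(\langle v_1,v_2,v_3\rangle\cap V\bigr)=2$ on the other side. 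So any correct proof must exploit the negativity of the contracted ray, which your non-dicriticality paragraph does not.

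The mechanism the paper actually uses is the sign analysis of the wall relation $a_1v_1+\cdots+a_{n+1}v_{n+1}=0$, normalized with $a_i<0$ for $i\le\alpha$, $a_i=0$ for $\alpha<i\le\beta$, and $a_i>0$ for $i>\beta$. The only case needing attention is when $V$ is spanned by rays of every maximal cone in one of the two decompositions of $\tau(\omega)$ but not the other, which forces $V=\langle v_1,\dots,v_\alpha\rangle$ or $V=\langle v_{\beta+1},\dots,v_{n+1}\rangle$. Since $D_{\langle v_i\rangle}\cdot D_\omega=a_i\,D_{\langle v_{n+1}\rangle}\cdot D_\omega$ with $D_{\langle v_{n+1}\rangle}\cdot D_\omega>0$, the inequality $K_\mF\cdot D_\omega=-\sum_{\langle v_i\rangle\subset V}D_{\langle v_i\rangle}\cdot D_\omega<0$ rules out the first alternative and forces $V=\langle v_{\beta+1},\dots,v_{n+1}\rangle$, i.e.\ $V$ is spanned by rays lying on the \emph{positive} side of the wall relation (your closing remark has this side reversed). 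Those rays belong to every new maximal cone $\tau_j$, $1\le j\le\alpha$, so by \textbf{\hyperref[215]{Corollary \ref*{215}}} the flipped foliation is nonsingular over the flipped locus and hence non-dicritical there. You should replace the convexity step with this argument.
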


\begin{proof}
	The proof is similar to \cite[Lemma 10.9]{Spi20}. We only need to prove the non-dicriticality of $\mF^+$.
	
	Again, by the negativity lemma \cite[Lemma 3.39, Corollary 3.42]{KM98},  $\mF^+$ has canonical singularities. Hence $\mF^+$ is non-dicritical by \textbf{\hyperref[38]{Theorem \ref*{38}}}.
\end{proof}

\begin{example}
	$$	\begin{tikzpicture}
		\draw[thin,-] (-2,0) -- (2,0);
		\draw[thin,-] (2,0) -- (0,-2);
		\draw[thin,-] (2,0) -- (0,2);
		\draw[thin,-] (0,2) -- (-2,0);
		\draw[thin,-] (-2,0) -- (0,-2);
		\draw[] (-2,0) node[anchor=east] { $v_1$ };
		\draw[] (2,0) node[anchor=west] { $v_2$ };
		\draw[] (0,2) node[anchor=south] { $v_3$ };
		\draw[] (0,-2) node[anchor=north] { $v_4$ };
		\draw[dashed,->] (3,0) -- (5,0);
		\draw[thin,-] (-2,0) -- (0,0) ;
		\draw[thin,-] (8,2) -- (8,-2);
		\draw[thin,-] (10,0) -- (8,-2);
		\draw[thin,-] (10,0) -- (8,2);
		\draw[thin,-] (8,2) -- (6,0);
		\draw[thin,-] (6,0) -- (8,-2);
		\draw[] (6,0) node[anchor=east] { $v_1$ };
		\draw[] (10,0) node[anchor=west] { $v_2$ };
		\draw[] (8,2) node[anchor=south] { $v_3$ };
		\draw[] (8,-2) node[anchor=north] { $v_4$ };
	\end{tikzpicture}
	$$
	Let $v_1 = (1,0,0), v_2=(0,1,1), v_3=(0,1,0), v_4=(1,0,1)$. Let $\mF$ be the toric foliation determined by the vector space $V_{\langle v_3, v_4 \rangle }$. The linear relation is given by $-v_1-v_2+v_3+v_4=0$. The extremal rays of $X$ and $X^+$ are given by cones $\omega = \langle v_1, v_2 \rangle$ and $\omega^+ = \langle v_3, v_4 \rangle$ respectively.
	
	By the intersection formulas of toric variety \cite{CLS11}, we have 
	$K_\mF \cdot D_\omega = -2$ and $K_{\mF^+} \cdot D_{\omega^+} = 2$. Notice that $\mF$ is dicritical and $\mF^+$ has no singularities. Therefore $X \dasharrow X^+$ is a toric flip that turns a dicritical foliation into a non-dicritical one, but not the other way around.

\end{example}

Now we obtain the \textbf{\hyperref[14]{Theorem \ref*{14}}} with results of \textbf{\hyperref[39]{Lemma \ref*{39}}} and \textbf{\hyperref[310]{Lemma \ref*{310}}}.

\begin{proof} 
	The proof is similar to \cite[Lemma 10.10]{Spi20}.
\end{proof}




	
	
	


\begin{thebibliography}{10}
\expandafter\ifx\csname url\endcsname\relax
\def\url#1{\texttt{#1}}\fi
\expandafter\ifx\csname urlprefix\endcsname\relax\def\urlprefix{URL }\fi
\expandafter\ifx\csname href\endcsname\relax
\def\href#1#2{#2} \def\path#1{#1}\fi

\bibitem{AD13}
C.~Araujo, S.~Druel, On {F}ano foliations, Advances in Mathematics 238 (2013)
70--118.

\bibitem{Bru15}
M.~Brunella, Birational geometry of foliations, Springer, 2015.

\bibitem{CLS11}
D.~A. Cox, J.~B. Little, H.~K. Schenck, Toric varieties, Vol. 124, American
Mathematical Society, 2011.

\bibitem{CS20}
P.~Cascini, C.~Spicer, On the {MMP} for rank one foliations on threefolds,
arXiv preprint arXiv:2012.11433 (2020).

\bibitem{CS21}
P.~Cascini, C.~Spicer, {MMP} for co-rank one foliations on threefolds,
Inventiones mathematicae 225~(2) (2021) 603--690.

\bibitem{CT22}
A.~Clarke, C.~Tipler, Equivariant stable sheaves and toric {GIT}, Proceedings
of the Royal Society of Edinburgh: Section A Mathematics (2022) 1–32.

\bibitem{DDK19}
J.~Dasgupta, A.~Dey, B.~Khan, Stability of equivariant vector bundles over
toric varieties, 
Documenta Mathematica 25 (2019) 1787--1833.

\bibitem{GJK17}
A.~Gholampour, Y.~Jiang, M.~Kool, Sheaves on weighted projective planes and
modular forms, Advances in Theoretical and Mathematical Physics 21~(6) (2017)
1455--1524.

\bibitem{Kly90}
A.~A. Klyachko, Equivariant bundles on toral varieties, Mathematics of the
USSR-Izvestiya 35~(2) (1990) 337.

\bibitem{Kly91}
A.~A. Klyachko, Vector bundles and torsion free sheaves on the projective
plane, Max-Planck-Institut f{\"u}r Mathematik, 1991.

\bibitem{Koo11}
M.~Kool, Fixed point loci of moduli spaces of sheaves on toric varieties,
Advances in Mathematics 227~(4) (2011) 1700--1755.

\bibitem{KM98}
J.~Koll{\'a}r, S.~Mori, Birational geometry of algebraic varieties, Cambridge University Press, 1998.

\bibitem{KS98}
A.~Knutson, E.~Sharpe, Sheaves on toric varieties for physics, Advances in
Theoretical and Mathematical Physics (1998) 865--948.

\bibitem{Mat13}
K.~Matsuki, Introduction to the {M}ori program, Springer Science \& Business
Media, 2013.

\bibitem{McQ08}
M.~McQuillan, Canonical models of foliations, Pure and applied mathematics
quarterly 4~(3) (2008) 877--1012.

\bibitem{Pan15}
T.-S. Pang, The {H}arder-{N}arasimhan filtrations and rational contractions,
Ph.D. thesis, Universit{\"a}t Freiburg (2015).

\bibitem{Per03}
M.~Perling, Resolutions and moduli for equivariant sheaves over toric
varieties, Ph.D. thesis, Universit{\"a}t Kaiserslautern (2003).

\bibitem{Per04a}
M.~Perling, Graded rings and equivariant sheaves on toric varieties,
Mathematische Nachrichten 263~(1) (2004) 181--197.

\bibitem{Per04b}
M.~Perling, Moduli for equivariant vector bundles of rank two on smooth toric
surfaces, Mathematische Nachrichten 265~(1) (2004) 87--99.


\bibitem{Rei83}
M.~Reid, Decomposition of toric morphisms, in: Arithmetic and geometry,
Springer, 1983, pp. 395--418.

\bibitem{Spi20}
C.~Spicer, Higher-dimensional foliated {M}ori theory, Compositio Mathematica
156~(1) (2020) 1--38.

\bibitem{Wan20}
W.~Wang, Toric sheaves on {H}irzebruch orbifolds, Documenta Mathematica 25
(2020) 655--699.


\end{thebibliography}
\end{document}